\newcommand{\mc}{\mathcal{C}}
\newtheorem{thm}{Theorem}[section]
\newtheorem{lmm}[thm]{Lemma}
\newtheorem{prop}[thm]{Proposition}
\theoremstyle{definition}
\newtheorem{remark}[thm]{Remark}
\newcommand{\ee}{\mathbb{E}}
\newcommand{\ma}{\mathcal{A}}
\newcommand{\pp}{\mathbb{P}}
\newcommand{\rr}{\mathbb{R}}
\newcommand{\ve}{\varepsilon}
\newcommand{\mb}{\mathcal{B}}
\numberwithin{equation}{section}
\renewcommand{\tilde}{\widetilde}
\begin{document}
\title{Isomorphisms between random graphs}
\author{Sourav Chatterjee}
\author{Persi Diaconis}

\address{Departments of mathematics and statistics, Stanford University}
\email{souravc@stanford.edu}
\email{diaconis@math.stanford.edu}
\thanks{S.~Chatterjee was partially supported by NSF grants DMS-1855484 and DMS-2113242}
\thanks{P.~Diaconis was partially supported by NSF grant DMS-1954042}
\keywords{Random graph, graph isomorphism, concentration inequality}
\subjclass[2020]{05C80, 05C60, 60C05}

\begin{abstract}
Consider two independent Erd\H{o}s--R\'enyi $G(N,1/2)$ graphs. We show that with probability tending to $1$ as $N\to\infty$, the largest induced isomorphic subgraph has size either $\lfloor x_N-\ve_N\rfloor$ or $\lfloor x_N+\ve_N \rfloor$, where $x_N=4\log_2 N -2 \log_2 \log_2 N - 2\log_2(4/e)+1$ and $\ve_N = (4\log_2 N)^{-1/2}$. Using similar techniques, we also show that if $\Gamma_1$ and $\Gamma_2$ are independent $G(n,1/2)$ and $G(N,1/2)$ random graphs, then $\Gamma_2$ contains an isomorphic copy of $\Gamma_1$ as an induced subgraph with high probability if $n\le \lfloor y_N - \ve_N \rfloor$ and does not contain an isomorphic copy of $\Gamma_1$ as an induced subgraph with high probability if $n>\lfloor y_N+\ve_N \rfloor$, where $y_N=2\log_2 N+1$ and $\ve_N$ is as above.
\end{abstract}

\maketitle

\section{Introduction}
This paper has two motivations. First, in giving expository talks about the Rado graph, we asked: ``Let $\Gamma_1$ and $\Gamma_2$ be independent $G(N, 1/2)$. What's the chance they are isomorphic? Small? How small? Less than $N!2^{-{N\choose2}}$. So, when $N=100$, less than $10^{-1300}$. Now suppose $N=\infty$ (a $G(\infty,1/2)$ graph is called a Rado graph). The chance that $\Gamma_1 \cong \Gamma_2$ is one!''  To help think about this seeming discontinuity, we asked, for finite $N$, how large is the largest induced isomorphic subgraph of $\Gamma_1$ and $\Gamma_2$. 

\begin{thm}\label{thm1}
Let $\Gamma_1$ and $\Gamma_2$ be independent $G(N,1/2)$ graphs. Let $L_N$ be the size of the largest induced isomorphic subgraph. Then, with probability tending to one as $N\to\infty$, $L_N$ is either $\lfloor x_N-\ve_N\rfloor$ or $\lfloor x_N+\ve_N \rfloor$, where $x_N = 4\log_2 N -2 \log_2 \log_2 N - 2\log_2(4/e)+1$ and $\ve_N = (4\log_2 N)^{-1/2}$.
\end{thm}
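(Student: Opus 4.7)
The plan is a first-and-second moment argument on the natural counting random variable. For each integer $k$, let $X_k$ count triples $(S,T,\phi)$ with $S,T\in\binom{[N]}{k}$ and $\phi\colon S\to T$ a bijection that is a graph isomorphism from $\Gamma_1[S]$ onto $\Gamma_2[T]$. Because all $2\binom{N}{2}$ edge indicators of $\Gamma_1$ and $\Gamma_2$ are independent Bernoulli$(1/2)$ variables, the $\binom{k}{2}$ matching constraints imposed by a fixed $\phi$ are independent events of probability $1/2$, and
\[
\E[X_k]=\binom{N}{k}^{\!2} k!\,2^{-\binom{k}{2}}.
\]
A Stirling expansion of $\log_2 \E[X_k]$ identifies its zero (to the required number of terms) as $k=x_N$, and shows that the slope of $\log_2\E[X_k]$ at the critical point is $\sim -2\log_2 N$.

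For the upper bound, Markov's inequality applied at $k^+:=\lfloor x_N+\ve_N\rfloor+1$ suffices. The ratio $\E[X_{k+1}]/\E[X_k]=(N-k)^2 2^{-k}/(k+1)$ confirms that $\log_2\E[X_k]$ changes by $\approx -2\log_2 N$ per unit step throughout the critical window, so the choice $\ve_N=(4\log_2 N)^{-1/2}$ yields $\log_2 \E[X_{k^+}]\le -(1-o(1))\sqrt{\log_2 N}$, hence $\E[X_{k^+}]\to 0$ and $\P(L_N\ge k^+)\to 0$.

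For the lower bound I would apply Paley--Zygmund at $k^-:=\lfloor x_N-\ve_N\rfloor$. The same slope bound gives $\E[X_{k^-}]\ge 2^{(1-o(1))\sqrt{\log_2 N}}\to\infty$. To estimate $\E[X_{k^-}^2]$, fix a reference witness $\phi_0\colon S_0\to T_0$ and write $\E[X_{k^-}^2]=\E[X_{k^-}]\cdot S$, where $S:=\sum_\phi \P(\phi\text{ is an iso}\mid \phi_0\text{ is an iso})$. I would stratify $\phi$ by the overlap data $(a,b,c):=(|S\cap S_0|,\,|T\cap T_0|,\,|\{i\in S\cap S_0\colon \phi(i)=\phi_0(i)\in T_0\}|)$. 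The iso-constraints from $\phi$ and $\phi_0$ form a system of linear equations over $\F_2$ on the independent edge variables of $\Gamma_1$ and $\Gamma_2$, whose rank depends on $(a,b,c)$ and on how the common edges of $S\cap S_0$ are matched by the two bijections; this pins down the conditional probability to be $2^{\text{rank}-2\binom{k}{2}}$. The stratum of disjoint pairs contributes $(1-o(1))\E[X_{k^-}]^2$ to $\E[X_{k^-}^2]$, and the task is to show all other strata sum to $o(\E[X_{k^-}]^2)$.

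The principal obstacle is this last step: carrying out the combined enumeration and rank calculation for each stratum $(a,b,c)$, and proving the error terms are small enough on the $\ve_N$-scale. Unlike the classical clique second moment, the overlap here is two-sided (in both random graphs) and coupled by the bijections, so when $\phi$ disagrees with $\phi_0$ on their common domain, the constraint systems interlock through shared edge variables and the net rank deficit must be tracked carefully. Making the resulting error negligible against $\E[X_{k^-}]^2$ \emph{uniformly} in the regime $k^-=x_N-\Theta((\log N)^{-1/2})$, rather than at a coarser scale, is what forces the two-point concentration.
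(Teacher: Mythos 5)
Your setup is the same as the paper's: a first/second moment argument on the count of isomorphic witness pairs, with the critical point located by Stirling and the window width dictated by the slope $\approx -2\log_2 N$ of $\log_2\E[X_k]$. Your first-moment analysis is correct, and your normalization (counting triples $(S,T,\phi)$, so that $\E[X_k]=\binom{N}{k}^2k!\,2^{-\binom k2}$ with a single factor of $k!$) is an equivalent, slightly cleaner substitute for the paper's device of observing that the count of ordered witness pairs is either $0$ or at least $n!$.

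However, the step you defer as ``the principal obstacle'' is precisely the mathematical content of the proof, and it is a genuine gap, not a routine verification. When you condition on a reference witness $\phi_0$ and sum over witnesses $\phi$ in a stratum where the domains overlap in $m$ vertices but $\phi$ and $\phi_0$ disagree there, the quantity you must control reduces (after integrating out the $\Gamma_2$ randomness) to a sum over injections $\pi$ of the overlap into $[n]$ of
\[
\pp\bigl(X(i,j)=X(\pi(i),\pi(j))\ \text{for all}\ 1\le i<j\le m\bigr),
\]
i.e.\ the probability that the edge indicators of $\Gamma_1$ are invariant under $\pi$ on the overlap. A trivial bound here fails: there are up to $n!$ such $\pi$, and for $\pi$ near the identity the probability is close to $1$. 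The paper's resolution (Proposition~\ref{phiprop}) is to stratify by the number $k$ of fixed points of $\pi$ in $\{1,\dots,m\}$ and to prove the probability is at most $C_1e^{-C_2(m-k)m}$; this in turn rests on a combinatorial lemma (Lemma~\ref{fixedlmm}) showing that the set of non-fixed points contains a subset $B$ of at least a third of its size with $\pi(B)\cap B=\emptyset$, which is what produces enough \emph{independent} edge constraints (roughly $|B|\cdot k$ and $\binom{|B|}{2}$ of them) to beat the entropy $|T_k|\le\binom{m}{k}(n-k)!$ of the stratum. The resulting bound $\phi(m,n)\le K_1e^{K_2(n-m)\log(n-m)}$ for $m\ge 2n/3$ is exactly what makes the near-diagonal strata contribute $o(\E[X]^2)$ at the scale $k^-=x_N-\Theta((\log N)^{-1/2})$. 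Your ``rank of the $\F_2$ system'' framing is the right intuition, but without an argument that exhibits a quantitative lower bound on that rank uniformly over all disagreeing bijections --- and a matching count of bijections achieving each rank deficit --- the second moment bound, and hence the lower bound on $L_N$, is not established. (A small additional remark: the two-point concentration is already forced by the first-moment slope and the choice of $\ve_N$; the second moment only needs to succeed at $k^-$, so your last sentence slightly misattributes the source of the window width.)
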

To interpret the above result, note that $\ve_N\le 1/2$ when $N\ge 2$. This implies that $\lfloor x_N -\ve_N\rfloor$ and $\lfloor x_N + \ve_N \rfloor$ are either the same integer, or differ exactly by one. Thus, $L_N$ is either concentrated at one point, or at two consecutive points, depending on $N$. The latter case can happen only when $x_N$ is close to an integer (specifically, within less than $\ve_N$ of an integer).

Incidentally, our proof does not rule out the possibility that $L_N$ is asymptotically concentrated on one point, and not two. Proving or disproving this would require a more delicate analysis of certain remainder terms than what we currently have (see the remark at the end of Section \ref{thm1proof}). Concentration on two points, although rare, is not unprecedented. For a classical example, see~\cite[Theorem 5.1]{ddf}.

Simulations run by Ciaran McCreesh and James Trimble using the McSplit algorithm~\cite{mpt} (personal communication) indicate that the asymptotics of Theorem~\ref{thm1} kick in for $N$ as small as $31$ (with a sample of size $1$). For $N = 31$, we have $x_N \doteq 15.08$ and $\ve_N \doteq 0.23$, and in the simulation, $L_N$ turned out to be~$14$.  (Theorem \ref{thm1} says that with high probability, $L_N\in \{14, 15\}$.) And the results continue to match the prediction of Theorem \ref{thm1} (sometimes off by one) all the way up to $N=45$, when the task starts  approaching computational limits (see Table \ref{simtab}).

\begin{table}[t]
\begin{scriptsize}
\caption{Results from simulating one instance of $G(N,1/2)$. \label{simtab}}
\begin{tabular}{lrrrrrrrrrrrrrrr}
\toprule
$N$ & $31$ & $32$ & $33$ & $34$ & $35$ & $36$ & $37$ & $38$ & $39$ & $40$ & $41$ & $42$ & $43$ & $44$ & $45$ \\
\midrule
$\lfloor x_N - \ve_N\rfloor$ & $14$ & $15$ & $15$ & $15$ & $15$ & $15$ & $15$ & $15$ & $16$ & $16$ & $16$ & $16$ & $16$ & $16$ & $16$\\
$\lfloor x_N + \ve_N\rfloor$ & $15$ & $15$ & $15$ & $15$ & $15$ & $16$ & $16$ & $16$ & $16$ & $16$ & $16$ & $16$ & $16$ & $17$ & $17$ \\
$L_N$ & $14$ & $14$ & $14$ & $14$ & $15$ & $15$ & $15$ & $15$ & $15$ & $15$ & $15$ & $15$ & $16$ & $16$ & $16$\\
\bottomrule
\end{tabular}
\end{scriptsize}
\end{table}

Theorem~\ref{thm1} is proved in Section \ref{thm1proof}, which gives some background.  The argument requires some work beyond the usual second moment method. 

Our second motivation for the problems studied here came from asking experts (we were sure that Theorem \ref{thm1} was in the literature). Svante Janson had just been asked the second question in the abstract (the one on subgraph isomorphism, described below) by Don Knuth. Subgraph isomorphism is a basic question in constraint satisfaction. Developers of programs had used $G(n,1/2)$, $G(N,1/2)$ as the center of extensive tests comparing algorithms. They had observed a sharp transition: When $n = 15$, $N=150$, the chance of finding an induced copy of $\Gamma_1$ in $\Gamma_2$ is close to $1$. When $n =16$, $N=150$, the chance is close to zero. Our theorem predicts this. To state a sharp result, let $P(n,N)$ be the probability that a random $G(N,1/2)$ graph contains an induced copy of a random $G(n,1/2)$ graph. The following theorem shows that $P(n,N)$ drops from $1$ to $0$ in a window of size~$\le 2$.
\begin{thm}\label{thm2}
With above notation, as $N\to \infty$, $P(\lfloor y_N - \ve_N \rfloor,N)\to 1$ and $P(\lfloor y_N + \ve_N \rfloor + 1,N)\to 0$, where $y_N = 2\log_2 N + 1$ and $\ve_N=(4\log_2N)^{-1/2}$. 
\end{thm}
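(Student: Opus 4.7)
The plan is to apply the first and second moment methods to the count $Y$ of injective maps $\phi\colon[n]\to[N]$ realizing $\Gamma_1$ as an induced subgraph of $\Gamma_2$ (i.e.\ satisfying $\Gamma_1(i,j) = \Gamma_2(\phi(i),\phi(j))$ for all $i<j$). For any fixed $\phi$ the $\binom{n}{2}$ constraints are mutually independent equalities between pairs of independent $\mathrm{Bernoulli}(1/2)$ random variables, so $\P(\phi\text{ is induced}) = 2^{-\binom{n}{2}}$, whence $\E[Y] = \frac{N!}{(N-n)!}\,2^{-\binom{n}{2}}$ and $\log_2 \E[Y] = \tfrac{n}{2}(y_N - n) + O(n^2/N)$. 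For $n = \lfloor y_N + \ve_N\rfloor + 1 > y_N + \ve_N$ one gets $\log_2 \E[Y] < -\tfrac{n\ve_N}{2}$, which is asymptotically $-\tfrac12\sqrt{\log_2 N}\to -\infty$, and Markov then yields $P(n,N)\le \E[Y]\to 0$.

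For the lower bound take $n = \lfloor y_N - \ve_N\rfloor$; symmetrically $\E[Y]\to\infty$. It suffices, by Paley--Zygmund, to show $\E[Y^2]/\E[Y]^2 \to 1$. Expand $\E[Y^2] = \sum_{\phi,\phi'} \P(\phi,\phi'\text{ both induced})$ and classify the pairs by overlap: $T = \phi([n])\cap\phi'([n])$ with $k = |T|$, $U = \phi^{-1}(T)$, $U' = \phi'^{-1}(T)$, and $\pi = \phi'^{-1}\phi\colon U\to U'$. Conditioning on $\Gamma_2$, the joint event decomposes into (i) the forced equalities $\Gamma_2(\phi(i),\phi(j)) = \Gamma_2(\phi'(i),\phi'(j))$, followed by (ii) $\Gamma_1$ matching the resulting common values. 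A connected-components count in the graph of constraints from (i) gives
\[
\P(\phi,\phi'\text{ both induced}) \;=\; 2^{-2\binom{n}{2}}\cdot 2^{\Delta(\phi,\phi')},
\]
where the excess $\Delta(\phi,\phi')\ge 0$ is a combinatorial quantity controlled by the orbit structure of $\pi$ acting on pairs from $[n]$; it vanishes whenever $U\cap U' = \emptyset$, and attains its maximum $\binom{n}{2}$ only at the diagonal $\phi = \phi'$.

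The contribution from $k = 0$ pairs (which always have $\Delta = 0$) is $(1 - O(n^2/N))\E[Y]^2$, so it remains to show that pairs with $k\ge 1$ contribute only $o(\E[Y]^2)$. The number of pairs with a given overlap size $k\ge 1$ is $O((N!/(N-n)!)^2 (n^2/N)^k)$, and this must be balanced against the excess $2^{\Delta}$; at one extreme the diagonal alone contributes exactly $\E[Y]$ (which is $o(\E[Y]^2)$ since $\E[Y]\to\infty$), while generic pairs with small overlap and no nontrivial $\pi$-orbit structure have $\Delta = O(1)$. The main obstacle is to carry out this balance uniformly over all intermediate overlap types: one has to stratify the sum by the cycle structure of $\pi$, bound each stratum's count and excess separately, and verify that the resulting double sum is $o(1)$, using $n\sim 2\log_2 N$ so that the decay of $(n^2/N)^k$ and the combinatorial rarity of large-excess $\pi$'s together dominate. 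This parallels the refined second-moment step in the proof of Theorem~\ref{thm1}. Combined with $|\mathrm{Aut}(\Gamma_1)| = 1$ w.h.p.\ (since $n\to\infty$), Paley--Zygmund then yields $\P(Y > 0)\to 1$, and hence $P(n,N)\to 1$.
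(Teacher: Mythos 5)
Your first-moment half is correct and is essentially the paper's Lemma~\ref{momlmm1}: $\E[Y]=(N)_n\,2^{-\binom{n}{2}}$, so $\log_2\E[Y]=\tfrac{n}{2}(y_N-n)+O(n^2/N)$, which for $n=\lfloor y_N+\ve_N\rfloor+1$ is at most $-\tfrac{n\ve_N}{2}\sim-\tfrac12\sqrt{\log_2N}\to-\infty$. Your second-moment setup --- stratifying pairs $(\phi,\phi')$ by the overlap, extracting the partial bijection $\pi$, and writing the joint probability as $2^{-2\binom{n}{2}+\binom{m}{2}}\,\P\bigl(X(i,j)=X(\pi(i),\pi(j))\text{ for the relevant pairs}\bigr)$ --- is also exactly the paper's decomposition.

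The genuine gap is that the step you explicitly label ``the main obstacle'' is the heart of the proof, and you give no argument for it. Concretely, for overlap size $m$ close to $n$ the factor $N^{-m}2^{\binom{m}{2}}$ is enormous: at $m=n$ the full-overlap stratum contributes $\bigl(\sum_{\pi\in S_n}p(\pi)\bigr)/\E[Y]$ to $\E[Y^2]/\E[Y]^2$, where $p(\pi)=\P(X(i,j)=X(\pi(i),\pi(j))\ \forall\, i<j\le n)$. Since at $n=\lfloor y_N-\ve_N\rfloor$ the mean $\E[Y]$ can be as small as $2^{\Theta(\sqrt{\log N})}$, you need $\sum_\pi p(\pi)=O(1)$; the trivial bound $p(\pi)\le 1$ gives $n!=e^{\Theta(\log N\log\log N)}$, which is hopeless, so ``combinatorial rarity of large-excess $\pi$'s'' is precisely the assertion that must be proved, not assumed. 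The paper supplies it as Proposition~\ref{phiprop}: the sum over all injections $\pi$ of the self-similarity probability is at most $K_1e^{K_2(n-m)\log(n-m)}$ for $m\ge 2n/3$. Its proof needs a genuine idea, Lemma~\ref{fixedlmm} (inside the non-fixed points of $\pi$ below $m$ one can find a subset $B$ of proportion at least $1/3$ with $\pi(B)\cap B=\emptyset$), which yields the two bounds $p(\pi)\le 2^{-(m-k)k/3}$ and $p(\pi)\le 2^{-(m-k)^2/36+1/4}$ of Lemma~\ref{problmm} ($k$ being the number of fixed points below $m$); a weighted geometric mean of these then beats the count $\binom{m}{k}(n-k)!$ of such permutations. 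None of this is in your proposal. A minor additional point: invoking $|\mathrm{Aut}(\Gamma_1)|=1$ w.h.p.\ is neither needed nor sufficient here --- what the second moment requires is the annealed bound on $\sum_\pi p(\pi)$ just described.
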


For example, when $N= 150$, $y_N \doteq 15.46$ and $\ve_N \doteq 0.19$. So the theorem says we expect $P(15, 150) \doteq1$ and $P(16,150)\doteq0$, matching explicit computation~\cite{mc}. 

Incidentally, Theorem \ref{thm2} is related to the classic problem of figuring out the size of the smallest graph that contains all graphs on $n$ vertices as induced subgraphs. After many years of partial progress, this problem was recently settled by Noga Alon~\cite{alon}, who showed that the minimum $N$ is $(1+o(1))2^{(n-1)/2}$. The key step in Alon's proof is to show that if ${N\choose n} 2^{-{n\choose 2}} = \lambda$, then the probability that $G(N,1/2)$ contains all graphs of size $n$ as induced subgraphs is $(1-e^{-\lambda})^2 +o(1)$ uniformly in $n$ as $N\to\infty$. Without checking all the details, it seems to us that Alon's result  shows that the quantity $\tilde{P}(n,N) := \pp(G(N,1/2) \textup{ contains all graphs of size $n$})$ drops from $1$ to $0$ in a window of size $2$. This window is slightly to the left of our window, at $2\log_2 N - 2 \log_2\log_2 N + 2\log_2(e/2) + 1$. The reason is that the main contributor to $\tilde{P}(n,N)$ is the probability that $G(N,1/2)$ contains a copy of $K_n$ (the complete graph of size $n$), which is the `hardest' subgraph to contain. In particular, it is harder to contain $K_n$ than it is to contain a copy of $G(n,1/2)$, which is why Alon's window is shifted by $O(\log \log N)$ to the left. 

The fact that the size of the largest clique is concentrated at the above point is known from old work of Matula~\cite{matula1, matula2} and Bollob\'as and Erd\H{o}s~\cite{be76}.  Several other graph invariants have concentrated distributions but others can be proved to be `non-concentrated' at a finite number of points. For a survey and fascinating work on the chromatic number, see \cite{heckel}.

Theorem \ref{thm2} is proved in Section \ref{thm2proof},  which begins with a literature review on subgraph isomorphism. The final section has remarks and open problems.

\subsection*{Acknowledgements.} We thank Maryanthe Malliaris and Peter Cameron for teaching us about the Rado graph, and Jacob Fox, Benny Sudakov, and Noga Alon for useful feedback and references. We thank Svante Janson and Don Knuth for pointing us to subgraph isomorphism and for careful technical reading of our paper, and several useful suggestions. We thank Ciaran McCreesh and James Trimble for running a set of very helpful simulations using their McSplit algorithm, and Don Knuth for facilitating our communication with them. Lastly, we thank the two anonymous referees for several helpful comments.

\section{Isomorphic graphs}\label{thm1proof}
Because two $G(\infty,1/2)$ graphs are isomorphic with probability $1$, a random $R$ from $G(\infty,1/2)$ has come to be called {\it the} random graph. Non-random models for $R$ abound: Let the vertex set be $\{0,1,2,\ldots\}$ and put an undirected edge from $i$ to $j$ if $i<j$ and the $i^{\textup{th}}$ binary digit of $j$ is a $1$ (labeling the rightmost digit as the zeroth digit). So $i=0$ is connected to all odd $j$, $i=1$ is connected to $j \equiv 2$ or $3$ (mod $4$) and also to $0$, and so on. The amazing properties of $R$ are beautifully exposited (and proved) in Peter Cameron's lovely article \cite{cameron}. 

Logicians have developed facts about $R$ (and higher cardinality versions). After all, a graph is just a symmetric relation. Some of this is used in Diaconis and Malliaris~\cite{diaconismalliaris} to show that various algebraic problems are intractable because an associated commuting graph contains $R$ as an induced subgraph. 

The discontinuity between finite and infinite $N$ is jarring. Aren't infinite limits supposed to shed light on large finite $N$? Exploring this led to Theorem \ref{thm1}. 

A related question is pick an Erd\H{o}s--R\'enyi graph and ask for the largest $k$ such that it contains two disjoint isomorphic induced subgraphs with $k$ vertices. A combinatorial application of this problem is studied in \cite{lee}. Similar questions can be asked for other combinatorial objects (largest disjoint isomorphic subtrees in a tree, largest disjoint order isomorphic pair of sub-permutations in a permutation --- see~\cite{dudek} for applications of such problems). 

In the remainder of this section, we prove Theorem \ref{thm1}. Although we use log base $2$ in the statement of the theorem, we will work with log base $e$ throughout the proof, which will be denoted by $\log$, as usual. The proofs of both Theorems~\ref{thm1} and~\ref{thm2} make use of the following technical result. Take any $1\le m\le n$. Let $\Gamma$ be a $G(n,1/2)$ random graph. Let $X(i,j)$ be the indicator that $\{i,j\}$ is an edge in $\Gamma$. Let
\begin{align}\label{phidef}
\phi(m,n) := \sum_{\pi\in S_n} \pp(X(i,j) = X(\pi(i), \pi(j)) \textup{ for all } 1\le i<j\le m),
\end{align}
where the right side is interpreted as $n!$ if $m=0$ or $m=1$. 
\begin{prop}\label{phiprop}
There are positive universal constants $K_1$ and $K_2$ such that for all $n\ge 1$ and $2n/3\le m\le n$, 
\[
\phi(m,n) \le K_1 e^{K_2(n-m)\log (n-m)},
\]
where $0\log 0$ is interpreted as $0$.
\end{prop}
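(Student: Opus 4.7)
My strategy is to compute the probability inside the sum defining $\phi(m,n)$ exactly in terms of the cycle structure of $\pi$, regroup the sum by the union of cycles of $\pi$ contained in $[m]$, and then estimate using a combinatorial identity plus a classical bound on the expected number of automorphisms of a random graph. The first and crucial step is the \emph{orbit formula}. Let $p(\pi)$ denote the probability inside the sum. Since the edge indicators $X(\{a,b\})$ are independent Bernoulli$(1/2)$, the probability factorizes over the orbits of $\pi$ acting on $\binom{[n]}{2}$. For an orbit $O$ of size $\ell$, let $t$ be the number of elements of $O$ lying in $\binom{[m]}{2}$. If $t=\ell$ (the orbit is entirely contained in $\binom{[m]}{2}$), the constraints form a cyclic chain of $\ell$ equalities with one cyclic redundancy, contributing $2^{-(\ell-1)}$; if $t<\ell$, the chain is broken so the $t$ constraints are independent, contributing $2^{-t}$. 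Summing over orbits yields $p(\pi)=2^{-\binom{m}{2}+c(\pi,m)}$, where $c(\pi,m)$ is the number of orbits of $\pi$ on $\binom{[n]}{2}$ contained in $\binom{[m]}{2}$. Setting $F=F(\pi,m)$ to be the union of all cycles of $\pi$ contained in $[m]$, one checks that $c(\pi,m)$ equals the number of orbits of $\pi|_F$ on $\binom{F}{2}$, and so depends only on the restriction $\pi|_F$.

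Next I regroup permutations by the pair $(F,\alpha)$ with $\alpha=\pi|_F\in S_F$. For fixed $F$ of size $f$ and fixed $\alpha$, the number of $\pi\in S_n$ extending $\alpha$ with $F(\pi,m)=F$ equals the number of permutations of $[n]\setminus F$ whose every cycle meets $[n]\setminus[m]$. A standard EGF calculation (the bivariate EGF for such permutations is $(1-x)/(1-x-y)$) gives this count as $s\cdot(s+m-f-1)!$, where $s=n-m\ge 1$. Using the identity $\sum_{\alpha\in S_f}2^{c(\alpha,f)}=2^{\binom{f}{2}}\phi(f,f)$, I obtain the master formula
\[
\phi(m,n)=\sum_{f=0}^m\binom{m}{f}\,2^{\binom{f}{2}-\binom{m}{2}}\,\phi(f,f)\cdot s(s+m-f-1)!.
\]
The edge case $s=0$ reduces to bounding $\phi(n,n)=\E|\operatorname{Aut}(G(n,1/2))|$, handled by the same argument as below.

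To bound the master formula, I first note that $C:=\sup_f\phi(f,f)<\infty$: for $\pi\in S_f$ moving $k\ge 1$ points, at least half of the $k(2f-k-1)/2$ pairs involving a moved point lie in orbits of size $\ge 2$, forcing $c(\pi,f)\le \binom{f}{2}-k(2f-k-1)/4$; summing $\binom{f}{k}k!\le f^k$ times $2^{-k(2f-k-1)/4}$ over $k\ge 2$ gives a total that is $O(1)$ for $f$ larger than some absolute constant, with small $f$ handled by explicit inspection. Substituting $u=m-f$ in the master formula and using $\phi(m-u,m-u)\le C$, $\binom{m}{u}\le m^u/u!$, and $s(s+u-1)!\le s!(s+u)^u$ for $u\ge 1$, each $u\ge 1$ term is at most
\[
C\,s!\cdot\frac{1}{u!}\left[\frac{m(s+u)\cdot 2^{(u+1)/2}}{2^m}\right]^u.
\]
The hypothesis $m\ge 2n/3$ gives $s\le m/2$ and hence $s+u\le 3m/2$, so the bracketed expression is at most $(3\sqrt{2}/2)\,m^2\cdot 2^{-m/2}$ uniformly in $u\in[1,m]$. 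For $m$ above an absolute constant this bracket is $\le 1/2$, whence $\sum_{u\ge 1}\le C\,s!\cdot(e^{1/2}-1)$; the $u=0$ term equals $\phi(m,m)\,s!\le C\,s!$. Small $m$ (for which $n$ is also bounded) are controlled by the trivial inequality $\phi(m,n)\le n!$. Combining, $\phi(m,n)\le K_1\,s!\le K_1\,e^{s\log s}$, giving the stated bound with $K_2=1$.

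The conceptually trickiest step is the derivation of the orbit formula $p(\pi)=2^{-\binom{m}{2}+c(\pi,m)}$, which hinges on the case split between orbits entirely inside $\binom{[m]}{2}$ (yielding $\ell-1$ rather than $\ell$ independent constraints because of the cyclic redundancy) and partially-contained orbits (where the broken chain gives $t$ independent constraints). Once that identity and the ensuing master formula are in place, the remaining work is bookkeeping, and the $2n/3\le m$ hypothesis is precisely the condition needed to keep the bracketed expression above of size $O(m^2/2^{m/2})$, damping the $u\ge 1$ tail by a factor $2^{-\Theta(m)}$ so that the $u=0$ contribution of size $\Theta(s!)$ dominates.
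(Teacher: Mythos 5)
Your proof is correct, but it takes a genuinely different route from the paper's. The paper never evaluates the probability $p(\pi)$ exactly: it extracts from $\pi$ a subset $B$ of the non-fixed points in $[m]$ with $|B|\ge\frac13|\{i\le m:\pi(i)\ne i\}|$ and $\pi(B)\cap B=\emptyset$ (a short combinatorial lemma on cycles), deduces the one-sided bound $p(\pi)\le C_1e^{-C_2(m-k)m}$ where $k$ is the number of fixed points of $\pi$ in $[m]$, and then sums this against the crude count $|T_k|\le\binom{m}{k}(n-k)!$, splitting the sum at $k=2m-n$. You instead compute $p(\pi)=2^{-\binom{m}{2}+c(\pi,m)}$ exactly via the orbit decomposition of the induced action on pairs (your cyclic-redundancy versus broken-chain dichotomy is right, as is the reduction of $c(\pi,m)$ to $\pi|_F$), and turn $\phi(m,n)$ into an exact convolution against $\phi(f,f)=\ee|\mathrm{Aut}(G(f,1/2))|$ weighted by the count $s(s+m-f-1)!$ of permutations of $[n]\setminus F$ all of whose cycles meet $[n]\setminus[m]$; that count is the standard $q(p+q-1)!$ formula and is correct. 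What your route buys is a sharper conclusion, $\phi(m,n)=O((n-m)!)$, i.e.\ $K_2=1$, with the dominant term identified as $F=[m]$; what it costs is the input $\sup_f\phi(f,f)<\infty$, the classical rigidity of $G(f,1/2)$, whose proof (moved points force many pairs into nontrivial orbits) is essentially the paper's Lemmas \ref{fixedlmm}--\ref{problmm} specialized to $m=n$ — so the two arguments ultimately rest on the same mechanism, yours just isolates it more cleanly. One bookkeeping slip in your rigidity sketch: after discarding the at most $k/2$ transposed pairs that are fixed as pairs, and remembering that an orbit of size $\ell$ contributes only $\ell-1$ to the deficit $\binom{f}{2}-c(\pi,f)$, the correct lower bound on the deficit is of order $k(2f-k-2)/8$ rather than $k(2f-k-1)/4$; this changes nothing, since $\sum_{k\ge2}f^k2^{-k(f-1)/8}$ is still uniformly $O(1)$ for large $f$, with small $f$ handled by inspection as you say.
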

The proof of this proposition requires two lemmas. 
\begin{lmm}\label{fixedlmm}
Let $\pi\in S_n$. Let $A\subseteq \{1,\ldots,n\}$ be a set such that $\pi(i)\ne i$ for each $i\in A$. Then there exists $B\subseteq A$, such that $|B|\ge |A|/3$, and $\pi(i)\notin B$ for each $i\in B$.
\end{lmm}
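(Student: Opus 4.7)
The plan is to reformulate this as an independent-set problem on a graph of maximum degree~$2$, and then use the fact that disjoint unions of paths and cycles are nearly $3$-colorable.

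First I would define an auxiliary undirected graph $H$ on vertex set $A$ by putting an edge between $i$ and $j$ iff $j=\pi(i)$ or $i=\pi(j)$. The condition ``$\pi(i)\notin B$ for every $i\in B$'' is then exactly the condition that $B$ be an independent set of $H$: indeed, if $i\in B$ and $\pi(i)\in B\subseteq A$, then $\{i,\pi(i)\}$ is an edge of $H$ with both endpoints in $B$; conversely, if $\pi(i)\notin A$ then automatically $\pi(i)\notin B$, so only the edges of $H$ can obstruct the property. Note the hypothesis $\pi(i)\ne i$ on $A$ guarantees there are no loops.

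Next I would argue that $H$ has maximum degree at most $2$. Each $i\in A$ has at most two possible neighbors in $H$: namely $\pi(i)$ (if $\pi(i)\in A$) and $\pi^{-1}(i)$ (if $\pi^{-1}(i)\in A$); there are no further neighbors because $\pi$ is a bijection. Hence $H$ is a disjoint union of simple paths and simple cycles.

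The heart of the argument is the routine combinatorial fact that any such graph has an independent set of size at least one third of its vertices. For a path $P_k$, the independence number is $\lceil k/2\rceil\ge k/3$; for a cycle $C_k$ with $k\ge 3$, the independence number is $\lfloor k/2\rfloor\ge k/3$ (with equality only at $k=3$). Summing over the connected components of $H$, I obtain an independent set $B\subseteq A$ with $|B|\ge |A|/3$, which is exactly the desired set. The only ``tight'' case is when $H$ contains triangles, i.e.\ $3$-cycles of $\pi$ entirely contained in $A$, so the constant $1/3$ is sharp.

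I do not expect a serious obstacle: the main thing to be careful about is the direction of the edges when verifying the equivalence between ``$B$ independent in $H$'' and ``$\pi(i)\notin B$ for all $i\in B$'' (one must use that $\pi^{-1}$ as well as $\pi$ can contribute an edge at each vertex), and to observe that the $|A|/3$ bound really is attained only by triangles, so cycles and paths of larger size cause no loss.
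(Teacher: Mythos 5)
Your proof is correct, and in substance it is the same argument as the paper's: the connected components of your auxiliary graph $H$ are exactly the cycles of $\pi$ intersected with $A$, and the paper's alternating selection of every other element within each cycle is precisely the computation of the independence number of the resulting path or cycle. Your packaging via the standard fact that a graph of maximum degree $2$ is a disjoint union of paths and cycles, each with independence number at least a third of its vertices, is a clean way to absorb the paper's case analysis (odd versus even cycles, and cycles fully versus partially contained in $A$) into the known values $\lceil k/2\rceil$ for paths and $\lfloor k/2\rfloor$ for cycles, with the triangle case showing the constant $1/3$ is sharp, exactly as in the paper.
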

\begin{proof}
Let $C = (c_1,\ldots,c_k)$ be a cycle in $\pi$. In the following, we will sometimes treat $C$ as the set $\{c_1,\ldots,c_k\}$. First, suppose that $k$ is even. Let $A_1 := A\cap \{c_1,c_3,\ldots,c_{k-1}\}$ and $A_2 := A\cap \{c_2,c_4,\ldots,c_k\}$. Let $A'$ be the larger of these two sets. Then $|A'|\ge \frac{1}{2}|A\cap C|$ and $\pi(i)\in C\setminus A'$ for each $i\in A'$. Next, suppose that $k$ is odd. If $k=1$, then $A$ does not intersect $C$ because $A$ contains no fixed point of $\pi$. In this case, let $A':=\emptyset$. If $k\ge 3$ (and odd), let $A_1 := A\cap \{c_1,c_3,\ldots, c_{k-2}\}$ and $A_2 := A\cap \{c_2,c_4,\ldots,c_{k-1}\}$. Let $A'$ be the larger of these two sets. Then again we have that $\pi(i)\in C\setminus A'$ for each $i\in A'$, and 
\begin{align}\label{aprime}
|A'| \ge \frac{1}{2}|A\cap \{c_1,\ldots,c_{k-1}\}|.
\end{align}
Now, if $C\subseteq A$, then by the above inequality and the fact that $k\ge 3$, we get
\[
|A'| \ge \frac{k-1}{2} \ge \frac{k}{3} = \frac{1}{3}|A\cap C|.
\]
On the other hand, if there is some element of $C$ that is not in $A$, then we may assume without loss of generality that $c_k\notin A$, because the cycle $C$ can be alternatively represented as $(c_l,c_{l+1},\ldots, c_k, c_1,c_2\ldots,c_{l-1})$ for any $l$. Thus, in this case,~\eqref{aprime} gives
\[
|A'|\ge \frac{1}{2}|A\cap C|. 
\]
To summarize, given any cycle $C$, we have constructed a set $A'\subseteq A\cap C$, such that $|A'|\ge \frac{1}{3}|A\cap C|$, and $\pi(i)\in C\setminus A'$ for each $i\in A'$. Let $B$ be the union of $A'$ over all cycles $C$. It is easy to see that this $B$ satisfies the two required properties. 
\end{proof}
\begin{lmm}\label{problmm}
Take any $\pi\in S_n$. Let $k := |\{i\le m: \pi(i)=i\}|$. Then
\begin{align*}
\pp(X(i,j) = X(\pi(i), \pi(j)) \textup{ for all } 1\le i < j \le m) &\le C_1e^{-C_2(m-k)m},
\end{align*}
where $C_1$ and $C_2$ are positive universal constants.
\end{lmm}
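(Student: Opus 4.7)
The plan is to use Lemma~\ref{fixedlmm} to isolate an appropriate ``pivot'' set $B$ and then expose many independent $1/2$-factors by a conditioning argument. Apply Lemma~\ref{fixedlmm} with $A := \{i\le m : \pi(i)\ne i\}$ (so $|A|=m-k$), and if necessary pass to a subset of the set it produces, to obtain $B\subseteq A$ of size $\lceil(m-k)/3\rceil$ with $\pi(B)\cap B=\emptyset$. Set $S:=\pi(B)$; then $B$ and $S$ are disjoint subsets of $[n]$ with $|S|=|B|$.

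I would then condition on every edge indicator $X(\{s,v\})$ with $s\in S$, $v\ne s$ (that is, on the ``star'' of $S$), and restrict attention to the family $\mathcal{Q}$ of unordered pairs $\{i,j\}$ with $i\in B$ and $j\in [m]\setminus(B\cup S)$. The key consequences of $B\cap S=\emptyset$ are: (i)~for each such $\{i,j\}$, the constraint $X(\{i,j\})=X(\{\pi(i),\pi(j)\})$ has a completely determined right-hand side (because $\pi(i)\in S$), while the left-hand side $X(\{i,j\})$ lies outside the conditioning (because neither $i\in B$ nor $j\in[m]\setminus S$ belongs to $S$); and (ii)~distinct elements of $\mathcal{Q}$ correspond to distinct left-hand variables, because the partition $B\sqcup([m]\setminus B)$ pins down the ordered pair from the unordered one. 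Hence, conditionally on the star of $S$, the events ``$X(\{i,j\})=X(\{\pi(i),\pi(j)\})$'' for $\{i,j\}\in\mathcal{Q}$ are jointly independent and each has probability $1/2$, so taking expectations yields $\pp(\text{all constraints hold})\le (1/2)^{|\mathcal{Q}|}$.

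It then remains to count: $|\mathcal{Q}|\ge|B|(m-2|B|)$ because $|S\cap[m]|\le|S|=|B|$, and with $|B|=\lceil(m-k)/3\rceil$ this is at least $c(m-k)m$ for a universal $c>0$ once $m$ exceeds a universal constant. Combining these gives the desired bound; the bounded range of small $m$ (where $(m-k)m$ is itself bounded) is absorbed into the constant $C_1$. The real work is arranging (i) and (ii), which together rest on the single algebraic ingredient $B\cap\pi(B)=\emptyset$ supplied by Lemma~\ref{fixedlmm}; beyond that, only a short conditional-probability argument and elementary counting remain.
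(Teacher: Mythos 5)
Your proof is correct, and it reaches the bound by a somewhat different decomposition than the paper's. Both arguments hinge on the same key input --- Lemma~\ref{fixedlmm} applied to $A=\{i\le m:\pi(i)\ne i\}$ to produce $B$ with $\pi(B)\cap B=\emptyset$ --- but the paper then extracts \emph{two} separate families of constraints, namely the pairs in $B\times F$ (with $F$ the set of fixed points in $[m]$, giving $2^{-|B||F|}\lesssim 2^{-(m-k)k/3}$) and the pairs inside $B$ (giving $2^{-\binom{|B|}{2}}\lesssim 2^{-(m-k)^2/36}$), and interpolates between the two bounds by a weighted geometric mean to assemble the exponent $(m-k)m=(m-k)k+(m-k)^2$. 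You instead use a \emph{single} family, the pairs in $B\times([m]\setminus(B\cup S))$ with $S=\pi(B)$, whose second factor absorbs both the fixed points and most of the remaining non-fixed points, so the count $|B|(m-2|B|)\gtrsim(m-k)m$ comes out in one step; trimming $B$ down to size $\lceil(m-k)/3\rceil$ is exactly the right move to keep $m-2|B|$ of order $m$ (a subset of $B$ inherits $\pi(B')\cap B'=\emptyset$, so this is legitimate). Your conditioning on the star of $S$ is also a cleaner way to certify independence than the paper's implicit bipartite edge-disjointness argument: since each right-hand edge $\{\pi(i),\pi(j)\}$ meets $S$ while each left-hand edge $\{i,j\}$ avoids $S$, every constraint reduces, conditionally, to a fresh fair coin matching a constant, and distinct unordered pairs give distinct coins. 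What the paper's version buys is that it never needs the conditioning device; what yours buys is avoiding both the geometric-mean interpolation and the auxiliary observation that $\pi(i)\notin F$ for $i\in B$. The remaining bookkeeping (absorbing bounded $m$ into $C_1$, and the trivial case $m=k$) is handled correctly.
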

\begin{proof}
Let $F:= \{i\le m: \pi(i)=i\}$ and $A := \{i\le m: \pi(i)\ne i\}$, so that $|F|=k$ and $|A|=m-k$. By Lemma \ref{fixedlmm}, there exists $B\subseteq A$ such that $|B|\ge \frac{1}{3}|A| = \frac{1}{3}(m-k)$, and $\pi(i)\notin B$ for all $i\in B$. Moreover, since $\pi(\pi(i)) \ne \pi(i)$ for $i\in B$ (because otherwise, $\pi(i)=i\in B$), we conclude that $\pi(i)\notin F$. By independence of edges, this gives
\begin{align*}
&\pp(X(i,j) = X(\pi(i), \pi(j)) \textup{ for all } 1\le i< j\le m)\\
&\le \pp(X(i,j) = X(\pi(i), \pi(j)) \textup{ for all } i\in B, \, j\in F)\\
&= \pp(X(i,j) = X(\pi(i), j) \textup{ for all } i\in B, \, j\in F)\\
&= 2^{-|B||F|} \le 2^{-\frac{1}{3}(m-k)k}.
\end{align*}
On the other hand, since $\pi(B) \cap B = \emptyset$, we  have
\begin{align*}
&\pp(X(i,j) = X(\pi(i), \pi(j)) \textup{ for all } 1\le i< j\le m)\\
&\le \pp(X(i,j) = X(\pi(i), \pi(j)) \textup{ for all } i, j\in B, \, i< j)\\
&= 2^{-{|B|\choose 2}}\le 2^{-\frac{1}{36}(m-k)^2 + \frac{1}{4}}, 
\end{align*}
where the last inequality holds because $|B|\ge \frac{1}{3}(m-k)$ and ${a\choose 2} \ge \frac{1}{4}(a^2-1)$ for any nonnegative integer $a$. The proof is now completed by combining the two bounds (e.g., by taking a suitable weighted geometric mean). 
\end{proof}

\begin{proof}[Proof of Proposition \ref{phiprop}]
Throughout this proof, $C_1,C_2,\ldots$ will denote positive universal constants. Two of these, $C_1$ and $C_2$, are already fixed from Lemma \ref{problmm}. For $k=0,\ldots, m$, let $T_k$ be the set of all $\pi\in S_n$ such that $|\{i\le m: \pi(i)=i\}|=k$. Then by Lemma~\ref{problmm}, 
\begin{align}
\phi(m,n) &= \sum_{k=0}^m \sum_{\pi\in T_k} \pp(X(i,j) = X(\pi(i), \pi(j)) \textup{ for all } 1\le i< j\le m) \notag\\
&\le C_1\sum_{k=0}^m |T_k| e^{-C_2(m-k)m}. \label{phimn}
\end{align}
Now, to choose an element of $T_k$, we can first choose the locations of the $k$ fixed points of $\pi$ in $\{1,\ldots,m\}$, and then choose the remaining part of $\pi$. The first task can be done in ${m\choose k}$ ways, and having done the first task, the second task can be done in $\le (n-k)!$ ways. Thus,
\[
|T_k|\le {m\choose k}(n-k)!. 
\]
We will now use the above to get an upper bound for $k^{\textup{th}}$ term in \eqref{phimn}. First, suppose that $2m-n\le k\le m$ (noting that $2m-n\ge 0$, since $m\ge 2n/3$). Then $n-k\le 2(n-m)$, and hence
\begin{align*}
|T_k| e^{-C_2(m-k)m} &\le {m \choose k} (n-k)!e^{-C_2(m-k)m}\\
&\le {m \choose k} (2(n-m))^{2(n-m)}e^{-C_2(m-k)m},
\end{align*}
interpreting $0^0=1$ if $n=m$. 
This gives
\begin{align}
&\sum_{2m-n \le k\le m} |T_k| e^{-C_2(m-k)m} \notag \\
&\le\sum_{2n-m\le k\le m} {m \choose k} (2(n-m))^{2(n-m)}e^{-C_2(m-k)m}\notag \\
&\le (2(n-m))^{2(n-m)} \sum_{k=0}^m{m \choose m-k} e^{-C_2(m-k)m}\notag \\
&= (2(n-m))^{2(n-m)}(1+e^{-C_2m})^m\le C_3 e^{C_4(n-m)\log (n-m)}.\label{t1bd}
\end{align}
Next, suppose that $0\le k<2m-n$. Then $n-k < 2(m-k)$, which gives
\begin{align*}
|T_k| e^{-C_2(m-k)m} &\le {m \choose k} (n-k)!e^{-C_2(m-k)m}\\
&\le {m \choose k} (2(m-k))^{2(m-k)}e^{-C_2(m-k)m}\\
&\le {m \choose k} e^{C_5(m-k)\log m - C_2 (m-k) m}. 
\end{align*}
This shows that there is a sufficiently large number $n_0$, such that if $n\ge n_0$ (which implies that $m\ge 2n_0/3$), and $0\le k < 2m-n$, then we have
\[
|T_k| e^{-C_2(m-k)m} \le {m\choose k} e^{-C_6(m-k)m}. 
\]
Therefore, 
\begin{align}
\sum_{0\le k< 2m-n} |T_k| e^{-C_2(m-k)m}  &\le \sum_{0\le k< 2m-n} {m\choose m-k} e^{-C_6(m-k)m}\notag \\
&\le (1+e^{-C_6m})^m\le C_7. \label{t2bd}
\end{align}
Combining \eqref{phimn}, \eqref{t1bd}, and \eqref{t2bd}, we get the desired upper bound for sufficiently large $n$. We then get it for all $n$ by just increasing the value of $K_1$.
\end{proof}

We now start towards our proof of Theorem \ref{thm1}. Take any $N$, and let $a = 4/\log 2$, $b = -2/\log2$ and $c := \frac{1}{2}a(1-\log a)$. An easy verification shows that
\begin{align}\label{knuth}
&a \log N + b\log \log N + c \notag \\
&= 4\log_2N - 2\log_2 \log_2N - 2\log_2(4/e) = x_N - 1.
\end{align}
Choose any integer $n$ so that $|x_N - n|\le 2$, and write $n$ as
\begin{align*}
n = a\log N + b\log \log N + d,
\end{align*}
so that 
\begin{align}\label{cdeq}
c+1 - d = x_N - n.
\end{align}
Let $X(i,j)$ and $Y(i,j)$ be the indicators that $\{i,j\}$ is an edge in $\Gamma_1$ and $\Gamma_2$, respectively. Let $\ma$ be the set of all ordered $n$-tuples of distinct numbers from $\{1,\ldots,N\}$. For $A, B\in \ma$, we will write $A\simeq B$ if $X(a_i,a_j)=Y(b_i,b_j)$ for all $1\le i<j\le n$. Let
\[
W := |\{A, B\in \ma: A\simeq B\}|. 
\]
Note that $L_N \ge n$ if and only if $W>0$. We will prove an upper bound on $\pp(W>0)$ using the first moment method, and a lower bound using the second moment method. In the following, we adopt the convention that for any function $f$, $O(f(N))$ denotes any quantity whose absolute value is bounded above by a constant times $f(N)$, where the constant has no dependence on $N$. 
\begin{lmm}\label{xmomlmm1}
Let all notation be as above. Then
\[
\pp(W>0) \le e^{2(c+1-d)\log N + O((\log \log N)^2)}.
\]
\end{lmm}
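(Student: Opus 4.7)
The plan is a first-moment argument on $W$, sharpened by an over-counting correction. The key observation is that whenever $L_N \ge n$, i.e.\ whenever there is an isomorphism $\pi : S_1 \to S_2$ between size-$n$ induced subgraphs of $\Gamma_1$ and $\Gamma_2$, each of the $n!$ orderings of $S_1$ yields a distinct pair $(A,B) \in \ma\times\ma$ with $A\simeq B$ (take $A$ to be the chosen ordering and $B = \pi\circ A$). Consequently $W$ is either $0$ or at least $n!$, and Markov's inequality gives
\[
\pp(W > 0) \;=\; \pp(W \ge n!) \;\le\; \ee[W]/n!.
\]
The naive bound $\pp(W > 0) \le \ee[W]$ would be too weak by exactly this factor of $n!$, which turns out to cancel a spurious $\log N\,\log\log N$ contribution in $\log\ee[W]$.

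To compute $\ee[W]$, fix $A,B\in\ma$. The events $\{X(a_i,a_j) = Y(b_i,b_j)\}_{1\le i<j\le n}$ are independent $\mathrm{Bernoulli}(1/2)$'s, since the entries of $A$ (resp.\ $B$) are distinct so the relevant edges of $\Gamma_1$ (resp.\ $\Gamma_2$) are distinct and independent, and $\Gamma_1,\Gamma_2$ are independent of each other. Hence $\pp(A\simeq B) = 2^{-\binom{n}{2}}$ and $\ee[W] = |\ma|^2 \cdot 2^{-\binom{n}{2}}$ with $|\ma| = N!/(N-n)!$.

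Everything else is Stirling bookkeeping. Write $L = \log N$ and $\lambda = \log\log N$. Since $n = O(L)$, $\log(N!/(N-n)!) = nL + O(n^2/N) = nL + o(1)$, so $\log\ee[W] = 2nL - \binom{n}{2}\log 2 + o(1)$. Substituting $n = aL + b\lambda + d$ and using $a\log 2 = 4$, $b\log 2 = -2$, direct expansion gives
\[
\log\ee[W] \;=\; aL\lambda + 2(1-d)L + O(\lambda^2).
\]
For the denominator, Stirling yields $\log n! = n\log n - n + O(\log n)$; substituting $\log n = \log a + \lambda + O(\lambda/L)$ produces
\[
\log n! \;=\; aL(\log a - 1) + aL\lambda + O(\lambda^2) \;=\; -2cL + aL\lambda + O(\lambda^2),
\]
where the last equality uses $2c = a(1-\log a)$. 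Subtracting, the $aL\lambda$ contributions cancel exactly --- this is precisely why $b = -2/\log 2$ is the correct coefficient --- and we obtain
\[
\log(\ee[W]/n!) \;=\; 2(c+1-d)L + O(\lambda^2),
\]
which is the claim.

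The conceptual core is the over-counting observation $W \ge n!$; without it, the first-moment bound would be off by a factor of $e^{aL\lambda}$ and the sharp transition window would be unattainable. The remaining work is routine Stirling: the only point requiring care is confirming that all subleading contributions --- both from $\log(N!/(N-n)!)$ and from $\log n!$ --- really do sit inside $O((\log\log N)^2)$, which is straightforward once the leading cancellation is identified.
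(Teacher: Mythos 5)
Your proposal is correct and follows the same route as the paper: the crucial over-counting observation that $W>0$ forces $W\ge n!$ (via simultaneously permuting the entries of $A$ and $B$), then Markov's inequality applied at level $n!$, then Stirling expansion in which the $(\log N)^2$ and $\log N\cdot\log\log N$ terms cancel because $a=4/\log 2$ and $b=-2/\log 2$, leaving the coefficient $2(c+1-d)$ of $\log N$. The bookkeeping in your expansion checks out, so there is nothing to add.
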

\begin{proof}
First, note that 
\begin{align*}
\ee(W) &= |\ma|^2 2^{-{n\choose 2}} \le N^{2n} 2^{-{n\choose 2}}. 
\end{align*}
Next, note that if $A\simeq B$, then $A_\pi \simeq B_\pi$ for any $\pi\in S_n$, where $A_\pi$ and $B_\pi$ denote the lists $(a_{\pi(1)},\ldots a_{\pi(n)})$ and $(b_{\pi(1)},\ldots,b_{\pi(n)})$, respectively. Thus, $W>0$ if and only if  $W\ge n!$. This gives 
\[
\pp(W>0) = \pp(W\ge n!) \le \frac{\ee(W)}{n!}\le N^{2n} 2^{-{n\choose 2}}n^{-(n+\frac{1}{2})} e^{n + O(1)}.
\]
Plugging in the value of $n$, we get
\begin{align*}
\pp(W>0) &\le \exp\biggl(2(a\log N + b\log \log N + d) \log N \\
& - \frac{1}{2}(a\log N + b\log \log N + d)(a\log N + b\log \log N +d -1)\log 2\\
& -\biggl(a \log N + b\log \log N + d + \frac{1}{2}\biggr)\log(a \log N + b\log \log N + d)\\
& + a \log N + b\log \log N + d +  O(1)\biggr).
\end{align*}
The third line in the above display is a little bit more complicated than the rest. To simplify, let us use $\log(1+x)=x+O(x^2)$, which gives 
\begin{align*}
&\biggl(a \log N + b\log \log N + d + \frac{1}{2}\biggr)\log(a \log N + b\log \log N + d) \\
&=\biggl(a \log N + b\log \log N + d + \frac{1}{2}\biggr)\biggl(\log(a \log N)+\log\biggl(1 + \frac{b\log \log N + d}{a\log N}\biggr)\biggr) \\
&= \biggl(a \log N + b\log \log N + d + \frac{1}{2}\biggr)(\log\log N + \log a) + b\log \log N + O(1).
\end{align*}
Plugging this into the previous display, let us compute the coefficients of various terms. First, note that the coefficient of $(\log N)^2$ is 
\[
2a - \frac{a^2\log 2}{2},
\]
which is zero since $a=4/\log 2$. Next, the coefficient of $(\log N) \log \log N$ is 
\[
2b - ab\log 2 -a,
\]
which, again, is zero since $a = 4/\log2$ and $b = -2/\log 2$. The next highest term is $\log N$, whose coefficient is
\begin{align*}
2d -\frac{a}{2}(2d-1)\log 2 - a\log a + a &= 2 - a\log a + a - 2d\\
&= 2+2c-2d,
\end{align*}
since $a = 4/\log 2$ and $c = \frac{1}{2}a(1-\log a)$. 
All other terms are of order $(\log \log N)^2$ or smaller. This completes the proof.
\end{proof}
Next, we get a lower bound for $\pp(W>0)$ using the second moment method. For that, we need an upper bound on $\ee(W^2)$. Let $\ma_0$ be the set of all pairs $(A,B)\in \ma^2$ such that $A\cap B=\emptyset$ (considering $A$ and $B$ as sets rather than $n$-tuples). For each $1\le m\le n$, each $1\le i_1<\cdots<i_m\le n$, and each $m$-tuple of distinct $j_1,\ldots,j_m\in \{1,\ldots,n\}$, let $\ma_{i_1,\ldots,i_m; j_1,\ldots,j_m}$ be the set of all $(A,B)\in \ma^2$ such that $a_{i_1}=b_{j_1}, \ldots,a_{i_m} = b_{j_m}$, and $a_i\ne b_j$ for all $i\notin \{i_1,\ldots,i_m\}$ and $j\notin \{j_1,\ldots,j_m\}$. Note that these sets are disjoint, and their union, together with $\ma_0$, equals $\ma$. For $A,B,C,D\in \ma$, let
\[
P(A,B,C,D) := \pp(A\simeq B, \, C\simeq D).
\]
Then note that 
\begin{align*}
&\ee(W^2) = \sum_{A,B,C,D\in \ma} P(A,B,C,D)\\
&= \sum_{(B,D)\in \ma_0} \sum_{A, C\in \ma} P(A,B,C,D) \notag\\
&+ \sum_{m=1}^n \sum_{1\le i_1<\cdots<i_m\le n}\sum_{\substack{1\le j_1,\ldots,j_m\le n\\ \textup{distinct}}} \sum_{(B,D)\in \ma_{i_1,\ldots,i_m;j_1,\ldots,j_m}} \sum_{A,C\in \ma}P(A,B, C,D). \label{xew2}
\end{align*}
Let $\pp'$ denote the conditional probability given $\Gamma_1$. If $(B,D)\in \ma_0$, then 
\[
\pp'(A\simeq B,\, C\simeq D) = 2^{-2{n\choose 2}},
\]
and thus, the unconditional probability is also the same. Next, for $(B,D)\in \ma_{i_1,\ldots,i_m;j_1,\ldots,j_m}$, independence of edges implies that
\begin{align*}
&\pp'(A\simeq B,\, C\simeq D) \\
&= \pp'(X(a_i, a_j) = Y(b_i,b_j) \text{ and } X(c_i,c_j) = Y(d_i,d_j) \textup{ for all } 1\le i<j\le n)\\
&= 2^{-2\bigl({n\choose 2} - {m\choose 2}\bigr)}\pp'(X(a_{i_p},a_{i_q}) = Y(b_{i_p}, b_{i_q}) \text{ and } X(c_{j_p},c_{j_q}) = Y(d_{j_p}, d_{j_q})\\
&\qquad \qquad \qquad \qquad \qquad \qquad \text{ for all } 1\le p<q\le m)\\
&= 2^{-2\bigl({n\choose 2} - {m\choose 2}\bigr)}\pp'(Y(b_{i_p}, b_{i_q}) = X(a_{i_p},a_{i_q})=X(c_{j_p},c_{j_q}) \\
&\qquad \qquad \qquad \qquad \qquad \qquad \text{ for all } 1\le p<q\le m)\\
&= 2^{-2{n\choose 2} + {m\choose 2}} \mathbb{I}_{\{X(a_{i_p},a_{i_q})=X(c_{j_p},c_{j_q}) \text{ for all } 1\le p<q\le m\}},
\end{align*}
where $\mathbb{I}_E$ denotes the indicator of an event $E$, and in going from the third to the fourth line we used the fact that $b_{i_p}=d_{j_p}$ for each $p$. Thus, 
\begin{align*}
P(A,B, C,D)&= 2^{-2{n\choose 2} + {m\choose 2}} \pp(X(a_{i_p},a_{i_q})=X(c_{j_p},c_{j_q}) \\
&\qquad \qquad \qquad \qquad \qquad \text{ for all } 1\le p<q\le m).
\end{align*}
Let $p(a_{i_1},\ldots,a_{i_m}; c_{j_1},\ldots, c_{j_m})$ denote the probability on the right. Combining the above observations, we get
\begin{align*}
\ee(W^2) &= |\ma_0||\ma|^2 2^{-2{n\choose 2}}\\
&\quad + \sum_{m=1}^n \sum_{1\le i_1<\cdots<i_m\le n}\sum_{\substack{1\le j_1,\ldots,j_m\le n\\ \textup{distinct}}}\sum_{A,C\in \ma}  | \ma_{i_1,\ldots,i_m;j_1,\ldots,j_m}|2^{-2{n\choose 2} + {m\choose 2}}\\
&\qquad \qquad \qquad \qquad \qquad \qquad \qquad \qquad \cdot p(a_{i_1},\ldots,a_{i_m}; c_{j_1},\ldots, c_{j_m}).
\end{align*}
Note that 
\begin{align*}
|\ma_0|\le N^{2n}, \ \ |\ma|\le N^{n}, \ \   |\ma_{i_1,\ldots,i_m; j_1,\ldots,j_m}|\le N^{2n-m}.
\end{align*}
Now, given $m$, $i_1,\ldots, i_m$, $j_1,\ldots,j_m$ and $A$, note that by symmetry, 
\begin{align*}
&\sum_{C\in \ma} p(a_{i_1},\ldots,a_{i_m}; c_{j_1},\ldots, c_{j_m})\\
&=\sum_{C\in \ma} p(1,\ldots, m; c_{j_1},\ldots, c_{j_m}) \\
&= (N-m)_{n-m}\sum_{\substack{1\le e_1,\ldots,e_m \le N\\\textup{distinct}}} p(1,\ldots, m; e_{1},\ldots, e_{m}),
\end{align*}
where we used the standard notation $(x)_y = x(x-1)\cdots(x-y+1)$. 
Let $\xi(m,N)$ denote the last sum. Then, using the above information in the expression for $\ee(W^2)$ displayed above, we get 
\begin{align}\label{w2upper}
\ee(W^2) &\le N^{4n}2^{-2{n\choose 2}} \biggl(1+ \sum_{m=1}^n N^{-2m} 2^{{m\choose 2}}{n\choose m}(n)_m\xi(m,N)\biggr). 
\end{align}
Our goal now is to get an upper bound for $\xi(m,N)$. Take any $1\le m\le n$, and any distinct $1\le e_1,\ldots, e_m\le N$. Let $l := |\{e_1,\ldots,e_m\}\cap \{1,\ldots,m\}|$ and let $1\le p_1<\cdots < p_l\le m$ be the indices such that $e_{p_i}\in \{1,\ldots,m\}$ for each $i$. Let $\tilde{\pp}$ denote the conditional probability given $(X(i,j))_{1\le i<j\le m}$. Then
\begin{align*}
&\tilde{\pp}(X(p,q) = X(e_{p}, e_{q}) \text{ for all } 1\le p<q\le m)\\
&= 2^{-{m\choose 2} +{l\choose 2}} \mathbb{I}_{\{X(p_r,p_{s}) = X(e_{p_r}, e_{p_{s}}) \text{ for all } 1\le r<s\le l\}}.
\end{align*}
In the sum defining $\xi(m,N)$,  $e_1,\ldots, e_m$ can be chosen as follows. First, we choose $l$ between $0$ and $m$. Then, given $l$, we choose $1\le p_1<\cdots< p_l\le m$. Given $p_1,\ldots,p_l$, we choose distinct numbers $e_{p_1},\ldots, e_{p_l}\in \{1,\ldots,m\}$. Finally, we choose the rest of the $e_i$'s from outside $\{1,\ldots,m\}$ so that they are distinct. Breaking up the sum in this manner (and rewriting $f_1=e_{p_1},\ldots, f_l = e_{p_l}$, and using symmetry to replace $p_i$ by $i$ for $i=1,\ldots,l$), we get
\begin{align*}
\xi(m,N) &\le \sum_{l=0}^m 2^{-{m\choose 2} +{l\choose 2}}N^{m-l} {m\choose l}\biggl(\sum_{\substack{1\le f_1,\ldots, f_l\le m\\\textup{distinct}}}p(1,\ldots, l; f_{1},\ldots, f_{l})\biggr).
\end{align*}
By Proposition \ref{phiprop}, the inner sum is bounded by $K_1 e^{K_2(m-l)\log (m-l)}$ if $l\ge 2m/3$. If $l< 2m/3$, it is trivially bounded by $m^l$. Combining, we get that the sum is bounded by $C_1e^{C_2 \min\{l,m-l\}\log m}$ for some universal constants $C_1$ and $C_2$, which is bounded by $C_1e^{C_2 \min\{l,n-l\}\log n}$ since $n\ge m$. The same bound holds for ${m\choose l}$. Plugging these into the above display, and then using the resulting bound on $\xi(m,N)$ in \eqref{w2upper}, we get 
\begin{align*}
\ee(W^2) 
&\le N^{4n}2^{-2{n\choose 2}} \biggl(1+ \sum_{l=0}^n \sum_{m=\max\{1,l\}}^n N^{-m-l} 2^{{l\choose 2}}{n\choose m}(n)_m  \\
&\qquad \qquad \qquad \qquad \qquad \cdot C_1 e^{C_2\min\{l,n-l\}\log n}\biggr).
\end{align*}
In the following, we will use $C_1,C_2,\ldots$ to denote arbitrary universal constants, whose values may change from line to line. Throughout, we will implicitly assume that $N$ is large enough (depending only on our choice of $d'$), wherever required. For the innermost sum above, consider three cases. First, take $l=0$. Then the sum over $m$ is bounded by
\begin{align*}
\sum_{m=1}^n N^{-m} n^{2m} C_1 \le C_2n^2 N^{-1}. 
\end{align*}
Next, take $1\le l\le 2n/3$. Then the sum over $m$ is bounded by
\begin{align*}
\sum_{m=l}^n N^{-m-l} n^{2m} 2^{{l\choose 2}}C_1e^{C_2l\log n} \le C_3 n^{2l} N^{-2l}2^{{l\choose 2}} e^{C_2l\log n}\le C_4e^{-C_5l\log N},
\end{align*}
where the last inequality was obtained using 
\begin{align*}
N^{-2l} 2^{{l\choose 2}} &\le e^{-2l\log N} 2^{ln/3} \le e^{-2l\log N} 2^{(5l/3)\log_2 N},
\end{align*}
which holds because $l\le 2n/3$, and $n\le 5\log_2 N$ when $N$ is large enough. Next, using that $n\ge \frac{7}{2}\log_2 N$ for $N$ large enough, we have that for any $l \ge 2n/3$,
\begin{align*}
\frac{N^{-2(l+1)}2^{{l+1\choose 2}}}{N^{-2l}2^{{l\choose2}}} &= N^{-2} 2^l\ge N^{-2} 2^{(7/3)\log_2 N} = N^{1/3},
\end{align*} 
which implies, by backward induction on $l$, that 
\begin{align*}
N^{-2l}2^{{l\choose2}} &\le N^{-(n-l)/3} N^{-2n} 2^{{n\choose 2}}. 
\end{align*}
Thus, for $l\ge 2n/3$, the sum over $m$ is bounded by
\begin{align*}
\sum_{m=l}^n N^{-m-l} 2^{{l\choose 2}} {n\choose l} n! C_1 e^{C_2 (n-l)\log n}&\le C_3 n!N^{-2l} 2^{{l\choose 2}} e^{C_4(n-l)\log n}\\
&\le C_5 n! N^{-2n} 2^{{n\choose 2}}e^{-C_6(n-l)\log N},
\end{align*}
where, in the second inequality, we used the previous display and the fact that $\log N \gg \log n$ when $N$ is large enough. Now, from the proof of Lemma \ref{xmomlmm1}, we have that
\[
n! N^{-2n} 2^{{n\choose 2}} = e^{-2(c+1-d)\log N + O((\log \log N)^2)}.
\]
Combining all of the above (and trivially bounding $e^{-C_6(n-l)\log N}\le 1$), we get
\begin{align*}
\ee(W^2) &\le N^{4n}2^{-2{n\choose 2}}(1 +  e^{-2(c+1-d)\log N + O((\log \log N)^2)}). 
\end{align*}
On the other hand, 
\begin{align*}
\ee(W) &= ((N)_n)^2 2^{-{n\choose 2}}\ge N^{2n}2^{-{n\choose 2}}\biggl(1-\frac{n}{N}\biggr)^{2n}\\
&\ge N^{2n}2^{-{n\choose 2}}\biggl(1-\frac{2n^2}{N}\biggr).
\end{align*}
Thus, by the second moment inequality,
\begin{align*}
\pp(W>0) &\ge \frac{(\ee(W))^2}{\ee(W^2)}\\
&\ge \frac{(1-2n^2/N)^2}{1 +  e^{-2(c+1-d)\log N + O((\log \log N)^2)}}.
\end{align*}
Let $\ve_N := (4\log_2 N)^{-1/2}$, as in the statement of Theorem \ref{thm1}. By Lemma \ref{xmomlmm1}, $\pp(W>0)\to 0$ if $c + 1 -d < -\ve_N$, and by the above lower bound, $\pp(W>0)\to 1$ if $c+1 -d >\ve_N$. But by \eqref{cdeq}, $c+1 -d = x_N - n$. Since $\ve_N\in (0,1/2]$, this proves Theorem \ref{thm1}. 

\begin{remark}
To prove or disprove that $L_N$ concentrates on one point instead of two, one needs to carefully analyze and refine the $O((\log \log N)^2)$ error term in the above analysis and replace it by some explicit term plus $o(1)$ error, so that when $x_N$ is within $O((\log N)^{-1})$ of some integer $n$, one can show that $\pp(W>0)$ is close to neither $1$ or $0$. 
\end{remark}


\section{Subgraph isomorphism}\label{thm2proof}
Deciding if a graph $\Gamma_1$ appears as an induced subgraph of $\Gamma_2$ is a basic problem in the world of image analysis (does this person appear in this crowd scene?), chemistry, and database query. The problem is NP complete but modern constraint satisfaction algorithms can handle $\Gamma_1$ with hundreds of nodes and $\Gamma_2$ with thousands. A comprehensive review of programs and applications is in \cite{mc}. The forthcoming book by Knuth~\cite{knuth} features subgraph isomorphism as a basic problem of constraint satisfaction. 

Comparing algorithms requires a suite of test problems. The authors of \cite{mc} noted that most tests were done on `easy cases' where $\Gamma_2$ is fixed and  $\Gamma_1$ is chosen by choosing a random set of $n$ vertices of $\Gamma_2$ and taking that induced subgraph (so, $\Gamma_1$ appears in $\Gamma_2$). They noticed that taking $\Gamma_1$, $\Gamma_2$ from independent copies of $G(n,p)$, $G(N,q)$ led to different recommendations and conclusions.

As part of their extensive tests they fixed $N=150$ and discovered the phase transition discussed in the introduction. Their results are much richer when $p$ and $q$ are varied --- we only treat $p=q=1/2$. We believe the techniques introduced in this paper will allow similar limit theorems (at least for $p, q$ away from $\{0,1\}$). 

As mentioned earlier in the introduction, a closely related recent paper is that of Alon~\cite{alon}, which shows that if ${N\choose n} 2^{-{n\choose 2}} = \lambda$, then with probability $(1-e^{-\lambda})^2 + o(1)$, $G(N,1/2)$ contains every graph on $n$ vertices as an induced subgraph. Theorem \ref{thm2} is also related to the classical result about the concentration of the size of the largest clique in $G(N,1/2)$, due to Matula~\cite{matula1, matula2} and Bollob\'as and Erd\H{o}s~\cite{be76}. 


Take any $1\le n\le N$. Let $\Gamma_1$ and $\Gamma_2$ be independent $G(n,1/2)$ and $G(N,1/2)$ random graphs. In the remainder of this section, we prove Theorem \ref{thm2}. The proof is similar to that of Theorem \ref{thm1} (in particular, we use Proposition \ref{phiprop}), although a bit simpler because the expected number of copies of $\Gamma_1$ in $\Gamma_2$ has a simpler expression that the expected number of isomorphic pairs of induced subgraphs in two independent random graphs (which has an extra $n!$ in the denominator, leading to the $\log \log N$ correction). 

Let $X(i,j)$ be the indicator that $\{i,j\}$ is an edge in  $\Gamma_1$, and $Y(i,j)$ be the indicator that $\{i,j\}$ is an edge in $\Gamma_2$. Let $\ma$ be the set of all ordered $n$-tuples of distinct numbers from $\{1,\ldots,N\}$, as in the previous section. We will write $A\simeq \Gamma_1$ if $Y(a_i,a_j) = X(i,j)$ for all $1\le i<j\le n$. Let 
\[
W := |\{A\in \ma: A\simeq \Gamma_1\}|. 
\]
Note that $\Gamma_2$ contains a copy of $\Gamma_1$ as an induced subgraph if and only if $W>0$. 
\begin{lmm}\label{momlmm1}
Suppose that $n = a\log N + b$ for $a= 2/\log 2$ and some $b\in \rr$. Then 
\[
\pp(W >0) \le N^{1-b} 2^{-b(b-1)/2}.
\]
\end{lmm}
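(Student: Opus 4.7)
The plan is to apply Markov's inequality directly. Since $W$ is nonnegative and integer-valued, $\pp(W>0) = \pp(W\ge 1) \le \ee(W)$. For any fixed $A\in\ma$, independence of $\Gamma_1$ and $\Gamma_2$ together with the fact that each edge indicator in either graph is $\mathrm{Bernoulli}(1/2)$ imply that the events $\{Y(a_i,a_j)=X(i,j)\}$ for $1\le i<j\le n$ are mutually independent with probability $1/2$ each. Hence $\pp(A\simeq\Gamma_1) = 2^{-\binom{n}{2}}$, and
\[
\ee(W) \;=\; |\ma|\cdot 2^{-\binom{n}{2}} \;=\; (N)_n \cdot 2^{-\binom{n}{2}} \;\le\; N^n \, 2^{-\binom{n}{2}}.
\]

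It then suffices to substitute $n=a\log N+b$ with $a=2/\log 2$ and simplify. Expanding
\[
n\log N - \tbinom{n}{2}\log 2 \;=\; (a\log N+b)\log N - \tfrac12(a\log N+b)(a\log N+b-1)\log 2,
\]
the coefficient of $(\log N)^2$ is $a - \tfrac12 a^2\log 2 = a\bigl(1-\tfrac12 a\log 2\bigr)=0$ because $a\log 2 = 2$. Collecting the remaining terms (again using $a\log 2 = 2$) leaves $(1-b)\log N - \tfrac12 b(b-1)\log 2$. Exponentiating yields $\ee(W)\le N^{1-b}\cdot 2^{-b(b-1)/2}$, which is the claimed bound.

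There is really no obstacle to overcome here. The reason this is so much shorter than the corresponding first-moment step in Lemma \ref{xmomlmm1} is structural: here a single ordered tuple $A$ with $A\simeq\Gamma_1$ already witnesses containment of an induced copy of $\Gamma_1$ in $\Gamma_2$, so the implication $W>0\Rightarrow W\ge 1$ is essentially tight and Markov suffices. In Section \ref{thm1proof}, by contrast, any isomorphism between induced subgraphs produced $n!$ ordered pairs $(A,B)\in\ma^2$ with $A\simeq B$, which forced the strengthened estimate $W\ge n!$ and a division by $n!\sim n^n e^{-n}$; this is precisely what shifted the threshold by the $\log\log N$ correction and what makes the constant $a=2/\log 2$ appearing in Theorem \ref{thm2} exactly half of the corresponding constant in Theorem \ref{thm1}.
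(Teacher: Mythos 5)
Your proposal is correct and follows essentially the same route as the paper's proof: compute $\ee(W)=|\ma|\,2^{-\binom{n}{2}}\le N^n 2^{-\binom{n}{2}}$, substitute $n=a\log N+b$ so that the $(\log N)^2$ term cancels, and apply Markov's inequality. The closing remark about why no division by $n!$ is needed here, unlike in Lemma~\ref{xmomlmm1}, is accurate but not required for the proof.
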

\begin{proof}
Note that 
\begin{align*}
\ee(W) &= |\ma| 2^{-{n\choose 2}} \le N^n 2^{-{n\choose 2}}. 
\end{align*}
Plugging in $n = a \log N + b$, this gives
\begin{align*}
\ee(W) &\le \exp\biggl((a\log N + b) \log N - \frac{1}{2}(a\log N + b)(a\log N + b -1)\log 2\biggr)\\
&= N^{1-b} 2^{-b(b-1)/2}.
\end{align*}
By Markov's inequality, $\pp(W> 0 ) = \pp(W\ge 1) \le \ee(W)$. This completes the proof.
\end{proof}
For each $(A,B)\in \ma^2$, let 
\[
P(A,B) := \pp(A\simeq \Gamma_1, \, B\simeq \Gamma_1).
\]
Let $\ma_0$ and  $\ma_{i_1,\ldots,i_m; j_1,\ldots,j_m}$ be as in the previous section. Then 
\begin{align}
&\ee(W^2) = \sum_{A,B\in \ma}P(A,B)\notag\\
&= \sum_{(A,B)\in \ma_0}  P(A,B) \notag\\
&+ \sum_{m=1}^n \sum_{1\le i_1<\cdots<i_m\le n}\sum_{\substack{1\le j_1,\ldots,j_m\le n\\ \textup{distinct}}} \sum_{(A,B)\in \ma_{i_1,\ldots,i_m;j_1,\ldots,j_m}} P(A,B). \label{ew2}
\end{align}
We will now use the above expression to establish an upper bound for $\ee(W^2)$. 
\begin{lmm}\label{plmm}
We have
\begin{align*}
\ee(W^2) &\le N^{2n} 2^{-2{n\choose 2}}\biggl(1+ \sum_{m=1}^n \sum_{\substack{1\le j_1,\ldots,j_m\le n\\ \textup{distinct}}} {n\choose m} N^{-m}2^{ {m\choose 2}}p(j_1,\ldots,j_m)\biggr),
\end{align*}
where 
\begin{align*}
p(j_1,\ldots,j_m) := \pp(X(p,q)=X(j_p,j_q) \textup{ for all } 1\le p<q\le m).
\end{align*}
\end{lmm}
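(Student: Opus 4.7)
The plan is to analyze each block of the decomposition in \eqref{ew2} separately, factoring out $N^{2n} 2^{-2{n\choose 2}}$ at the end to recover the stated form. The argument runs exactly parallel to the one carried out for $\ee(W^2)$ in the proof of Theorem~\ref{thm1}, but is somewhat simpler because there is only one outer sum rather than two.

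First I would dispatch the diagonal block $\ma_0$. For $(A,B)\in\ma_0$ the vertex sets $A$ and $B$ are disjoint, so the edge families $\{Y(a_i,a_j)\}$ and $\{Y(b_i,b_j)\}$ are disjoint in $\Gamma_2$. Let $\pp'$ denote the conditional probability given $\Gamma_1$; then $\pp'(A\simeq\Gamma_1,\, B\simeq\Gamma_1) = \pp'(A\simeq\Gamma_1)\pp'(B\simeq\Gamma_1) = 2^{-2{n\choose 2}}$ on every realization of $\Gamma_1$, so $P(A,B)=2^{-2{n\choose 2}}$. Combining with $|\ma_0|\le N^{2n}$ produces the leading term $N^{2n}2^{-2{n\choose 2}}$.

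Next I would handle each intersection pattern $\ma_{i_1,\ldots,i_m;j_1,\ldots,j_m}$. For $(A,B)$ in this set, the identifications $a_{i_p}=b_{j_p}$ cause the edges $\{a_{i_p},a_{i_q}\}$ and $\{b_{j_p},b_{j_q}\}$ to coincide in $\Gamma_2$ for each $1\le p<q\le m$; the two constraints $Y=X(i_p,i_q)$ and $Y=X(j_p,j_q)$ are simultaneously satisfiable only when $X(i_p,i_q)=X(j_p,j_q)$, and they then impose a single constraint on $\Gamma_2$. In total there are $2{n\choose 2}-{m\choose 2}$ independent edges of $\Gamma_2$ being constrained, so exactly as in Section~\ref{thm1proof},
\[
P(A,B) = 2^{-2{n\choose 2}+{m\choose 2}}\,\pp\bigl(X(i_p,i_q)=X(j_p,j_q)\text{ for all }1\le p<q\le m\bigr).
\]
A crude count also yields $|\ma_{i_1,\ldots,i_m;j_1,\ldots,j_m}|\le N^{2n-m}$.

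The key remaining step is to reduce the sum over $(i_1,\ldots,i_m)$ to a single ${n\choose m}$ factor by using the permutation invariance of $G(n,1/2)$. For any fixed increasing tuple $i_1<\cdots<i_m$, relabel the vertices of $\Gamma_1$ by a permutation sending $i_p\mapsto p$; distributional invariance then gives
\[
\sum_{\substack{1\le j_1,\ldots,j_m\le n\\\text{distinct}}} \pp\bigl(X(i_p,i_q)=X(j_p,j_q)\ \forall\,p<q\bigr) \;=\; \sum_{\substack{1\le j_1,\ldots,j_m\le n\\\text{distinct}}} p(j_1,\ldots,j_m),
\]
since the change of variables in the $j$-sum is a bijection on distinct tuples. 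Plugging the three bounds above into \eqref{ew2}, summing the ${n\choose m}$ choices of $(i_1,\ldots,i_m)$, and factoring out $N^{2n}2^{-2{n\choose 2}}$ yields the claimed inequality. I do not expect a real obstacle here: every step is mechanical once the symmetry reduction in the last display is recognized, which is the only place any care is needed.
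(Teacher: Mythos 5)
Your proposal is correct and follows essentially the same route as the paper: conditioning on $\Gamma_1$, splitting $\ee(W^2)$ via \eqref{ew2} into the disjoint block $\ma_0$ and the overlap blocks $\ma_{i_1,\ldots,i_m;j_1,\ldots,j_m}$, computing $P(A,B)=2^{-2{n\choose 2}+{m\choose 2}}\pp(X(i_p,i_q)=X(j_p,j_q)\ \forall\,p<q)$, bounding the block sizes, and reducing the $(i_1,\ldots,i_m)$-sum to ${n\choose m}$ by vertex-relabeling symmetry of $G(n,1/2)$. The only difference is cosmetic: you spell out the symmetry step that the paper dismisses with ``using symmetry.''
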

\begin{proof}
Let $\pp'$ denote the conditional probability given $\Gamma_1$. If $(A,B)\in \ma_0$, then 
\[
\pp'(A\simeq \Gamma_1,\, B\simeq \Gamma_1) = 2^{-2{n\choose 2}},
\]
and thus, the unconditional probability is also the same. Next, for $(A,B)\in \ma_{i_1,\ldots,i_m;j_1,\ldots,j_m}$, we have
\begin{align*}
&\pp'(A\simeq \Gamma_1,\, B\simeq \Gamma_1) \\
&= \pp'(Y(a_i, a_j) = Y(b_i,b_j) = X(i,j) \textup{ for all } 1\le i<j\le n)\\
&= 2^{-2\bigl({n\choose 2} - {m\choose 2}\bigr)}\pp'(Y(a_{i_p}, a_{i_q}) = X(i_p,i_q)=X(j_p,j_q) \text{ for all } 1\le p<q\le m)\\
&= 2^{-2{n\choose 2} + {m\choose 2}} \mathbb{I}_{\{X(i_p,i_q)=X(j_p,j_q) \text{ for all } 1\le p<q\le m\}},
\end{align*}
where $\mathbb{I}_E$ denotes the indicator of an event $E$. 
Thus, 
\begin{align*}
P(A,B)&= 2^{-2{n\choose 2} + {m\choose 2}} \pp(X(i_p,i_q)=X(j_p,j_q) \text{ for all } 1\le p<q\le m).
\end{align*}
Combining the above observations, we get
\begin{align*}
&\ee(W^2) \\
&= |\ma_0| 2^{-2{n\choose 2}}+ \sum_{m=1}^n \sum_{ i_1<\cdots<i_m}\sum_{\substack{j_1,\ldots,j_m\\ \textup{distinct}}}| \ma_{i_1,\ldots,i_m;j_1,\ldots,j_m}|2^{-2{n\choose 2} + {m\choose 2}}\\
&\qquad \qquad \qquad \qquad \qquad \cdot \pp(X(i_p,i_q)=X(j_p,j_q) \text{ for all } 1\le p<q\le m).
\end{align*}
Note that 
\begin{align*}
|\ma_0|\le N^{2n}, \ \ \   |\ma_{i_1,\ldots,i_m; j_1,\ldots,j_m}|\le N^{2n-m}.
\end{align*}
Plugging these bounds into the previous display and using symmetry, we get the desired result.
\end{proof}
Henceforth, let us assume that 
\begin{align}\label{nN}
n = a\log N + b \ \text{ for }\  a=\frac{2}{\log 2} \ \text{ and some }\  b\in [-1,1].
\end{align}
\begin{lmm}\label{mainbdlmm}
Assume \eqref{nN}, and suppose that $m =\alpha n$ for some $\alpha \in [0,1]$. Then 
\[
-m \log N + {m\choose 2} \log2 \le -\alpha(1-\alpha)n\log N - \alpha(1-\alpha b)\log N + \log 2.
\]
\end{lmm}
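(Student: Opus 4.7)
The proof is a direct algebraic verification, and there is no real technical obstacle — just careful bookkeeping.

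The plan is to substitute $m = \alpha n$ and then use the hypothesis \eqref{nN} to express every $n$ in terms of $\log N$. Specifically, $n = a\log N + b$ with $a = 2/\log 2$ gives the convenient identity $n\log 2 = 2\log N + b\log 2$, which I will use repeatedly to convert between $n\log 2$ and $\log N$.

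First I would write the left-hand side as
\[
-\alpha n \log N + \frac{\alpha n(\alpha n - 1)}{2}\log 2.
\]
Using $n \log 2 = 2\log N + b\log 2$, the quadratic term $\tfrac12 \alpha^2 n^2 \log 2$ becomes $\alpha^2 n \log N + \tfrac12 \alpha^2 b n \log 2$, while the linear term $-\tfrac12 \alpha n \log 2$ becomes $-\alpha \log N - \tfrac12 \alpha b \log 2$. Combining these with $-\alpha n \log N$ and then applying $n\log 2 = 2\log N + b\log 2$ once more to eliminate the remaining $n$, I would reduce the left-hand side to an affine function of $\log N$ with coefficients depending only on $\alpha$ and $b$.

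Carrying out the same expansion for the right-hand side and subtracting, I expect the coefficient of $\log N$ to cancel (all the $\alpha b(1-\alpha)\log N$ terms match up), leaving the scalar identity
\[
\textup{RHS} - \textup{LHS} = \log 2 \cdot \Bigl(1 + \tfrac{1}{2}\alpha b(1-\alpha b)\Bigr).
\]
Thus the lemma reduces to showing $\alpha b(1-\alpha b) \ge -2$. Setting $u := \alpha b$, this is equivalent to $(u-2)(u+1) \le 0$, which holds for all $u \in [-1,2]$. Since $\alpha \in [0,1]$ and $b \in [-1,1]$ we have $u = \alpha b \in [-1,1] \subset [-1,2]$, so the inequality holds (with equality exactly when $\alpha = 1$ and $b = -1$).

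The only thing that could go wrong is an arithmetic slip during the expansion, so I would organize the computation by first rewriting LHS as $\tfrac12 \alpha n \log 2 \cdot [(\alpha-1)n + (b-1)]$, which makes the dependence on $(1-\alpha)$ and $(1-b)$ manifest and speeds up the subsequent substitution of $n\log 2 = 2\log N + b\log 2$.
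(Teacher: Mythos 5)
Your proposal is correct and follows essentially the same route as the paper: a direct expansion of $-m\log N+\binom{m}{2}\log 2$ using $n=(2/\log 2)\log N+b$, the observation that the $\log N$ coefficients on the two sides agree exactly, and the reduction to the scalar inequality $\alpha b(\alpha b-1)\le 2$ for $\alpha b\in[-1,1]$ (your $\alpha b(1-\alpha b)\ge -2$ is the same condition). The identity $\textup{RHS}-\textup{LHS}=\log 2\,(1+\tfrac12\alpha b(1-\alpha b))$ checks out.
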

\begin{proof}
Note that
\begin{align}
&-m \log N + {m\choose 2}\log 2 \notag \\
&= -\alpha a (\log N)^2 - \alpha b\log N + \frac{(\alpha a \log N + \alpha b) (\alpha a \log N + \alpha b-1)}{2}\log 2\notag\\
&= -\alpha(1-\alpha) a(\log N )^2 + \alpha((2\alpha-1)b - 1) \log N +\frac{\alpha b(\alpha b - 1)}{2}\log 2\notag\\
&\le -\alpha (1-\alpha)(a\log N + b) \log N - \alpha(1-\alpha b) \log N + \log 2,\notag
\end{align}
where in the last inequality we used  $\alpha b(\alpha b -1)\le 2$, which holds because $b\in [-1,1]$ and $\alpha \in [0,1]$.
\end{proof}
\begin{lmm}\label{w2lmm}
There are positive universal constants $C_1$, $C_2$ and $N_0$, such that if $N\ge N_0$, and \eqref{nN} holds, then 
\[
\ee(W^2) \le N^{2n}2^{-2{n\choose 2}} (1+C_1 N^{-C_2 (1-b)}).
\]
\end{lmm}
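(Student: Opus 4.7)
The plan is to use Lemma~\ref{plmm} and bound the parenthesized sum by splitting the range of $m$. The key simplification is the identity
\[
\sum_{\substack{1\le j_1,\ldots,j_m\le n\\ \textup{distinct}}} p(j_1,\ldots,j_m) = \frac{\phi(m,n)}{(n-m)!},
\]
which holds because each such distinct $m$-tuple extends to exactly $(n-m)!$ permutations of $\{1,\ldots,n\}$, all of which contribute the same probability appearing in the definition of $\phi$ (the events involve only indices $\le m$). Setting $T(m) := {n\choose m} N^{-m} 2^{{m\choose 2}} \phi(m,n)/(n-m)!$ and $l=n-m$, the strategy is to split into the regime $l\le n/3$ (so $m\ge 2n/3$, to which Proposition~\ref{phiprop} applies) and the complementary regime.

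For the first regime, Proposition~\ref{phiprop} gives $\phi(n-l, n) \le K_1 e^{K_2 l\log l}$. A direct computation of the exponent $-(n-l)\log N + {n-l\choose 2}\log 2$, using $n = a\log N + b$ and the identity $2^n = N^2\cdot 2^b$, yields an upper bound of the form $-(1-b)\log N - (2l/3)\log N + O(l)$ for $l\le n/3$. The key point is that the potentially dangerous quadratic $l(l+1)(\log 2)/2$ coming from ${n-l\choose 2}$ is beaten by $l\log N$ in this range, because $n\log 2\le 2\log N + O(1)$. Combined with ${n\choose l}\le n^l$, the Stirling-type bound on $e^{K_2 l\log l}/l!$, and the fact that $n = O(\log N)\ll N^{1/2}$, this will give
\[
T(n-l) \le C K_1 N^{-(1-b)}\bigl(eN^{-1/4}\bigr)^l
\]
for $N$ sufficiently large, so summing the resulting geometric series this regime contributes at most $C'K_1 N^{-(1-b)}$.

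For the second regime ($m<2n/3$), the trivial bound $\phi(m,n)\le n!$ gives $\phi(m,n)/(n-m)!\le n^m$. Lemma~\ref{mainbdlmm} with $\alpha = m/n$ then yields $N^{-m} 2^{{m\choose 2}}\le 2N^{-m/3}$, since the term $-\alpha(1-\alpha b)\log N$ is non-positive for all $\alpha\in[0,1]$, $b\in[-1,1]$ (because $\alpha b\le 1$), while $-\alpha(1-\alpha)n\log N\le -(m/3)\log N$ when $\alpha\le 2/3$. Combined with ${n\choose m}\cdot n^m\le n^{2m}/m!$, this gives $T(m)\le 2(n^2 N^{-1/3})^m/m! \le 2N^{-m/4}/m!$ once $N$ is large enough that $n^2\le N^{1/12}$. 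Summing yields at most $4N^{-1/4}$ in this regime.

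Combining the two regimes, the parenthesized sum is bounded by $1 + C'K_1 N^{-(1-b)} + 4N^{-1/4}$. For all $b\in[-1,1]$, the first error term satisfies $N^{-(1-b)}\le N^{-(1-b)/8}$ (trivially, since $1-b\ge 0$) and the second satisfies $N^{-1/4}\le N^{-(1-b)/8}$ (since $(1-b)/8\le 1/4$ when $b\ge -1$), so the claim follows with $C_2 = 1/8$ and $C_1 = C'K_1 + 4$. The main technical delicacy lies in the first regime: one must carefully verify that the exponential $2^{{n-l\choose 2}}$ is compensated by $N^{-(n-l)}$ with enough remaining margin to absorb the binomial ${n\choose l}$, the factor $1/l!$, and the Stirling term $e^{K_2 l\log l}$, which works precisely because the identity $n\log 2 = 2\log N + O(1)$ leaves a gap of order $l\log N$ over the relevant range of $l$.
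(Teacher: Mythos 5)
Your proposal is correct and follows essentially the same route as the paper's proof: the same reduction via Lemma~\ref{plmm} and the identity $\sum p(j_1,\ldots,j_m)=\phi(m,n)/(n-m)!$, the same split at $m=2n/3$ with Proposition~\ref{phiprop} handling the large-$m$ regime and the trivial bound $p\le 1$ together with Lemma~\ref{mainbdlmm} handling the small-$m$ regime. The only (harmless) deviation is that you expand the exponent $-m\log N+\binom{m}{2}\log 2$ directly in the first regime instead of invoking Lemma~\ref{mainbdlmm} there, which in fact yields the slightly sharper exponent $N^{-(1-b)}$ in place of the paper's $N^{-5(1-b)/9}$; both suffice for the stated conclusion.
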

\begin{proof}
In this proof, $C_1,C_2,\ldots$ will denote arbitrary positive universal constants. Let $p(j_1,\ldots,j_m)$ be as in Lemma \ref{plmm}. First, note that 
\begin{align*}
\sum_{\substack{j_1,\ldots,j_m\\ \textup{distinct}}}p(j_1,\ldots,j_m) &=\frac{\phi(m,n)}{ (n-m)!},
\end{align*}
where $\phi(m,n)$ is the quantity defined in \eqref{phidef}. Therefore, by Proposition \ref{phiprop},
\begin{align*}
&\sum_{2n/3\le m\le n} \sum_{\substack{j_1,\ldots,j_m\\ \textup{distinct}}} {n\choose m} N^{-m}2^{ {m\choose 2}}p(j_1,\ldots,j_m)\\
&\le C_1 \sum_{2n/3\le m\le n} N^{-m}2^{ {m\choose 2}}e^{C_2(n-m)\log n}.
\end{align*}
For $m \ge 2n/3$, Lemma \ref{mainbdlmm} gives (using $(1-\alpha)n = n-m$ and $\alpha b \le (5b+1)/6$ for all $\alpha\in [2/3,1]$ and $b\in [-1,1]$)
\begin{align*}
-m \log N + {m\choose 2}\log 2 &\le -\alpha(n-m)\log N - \frac{5\alpha}{6}(1-b)\log N + \log 2\\
&\le -\frac{2}{3}(n-m)\log N - \frac{5}{9}(1-b)\log N +\log 2. 
\end{align*}
Thus, for $N\ge N_0$, where $N_0$ is a sufficiently large universal constant, we have
\begin{align*}
&\sum_{2n/3\le m\le n} N^{-m}2^{{m\choose 2}}e^{C_2(n-m)\log n} \\
&\le \sum_{2n/3\le m\le n} 2e^{-C_3(n-m)\log N}N^{-5(1-b)/9}\le C_4 N^{-5(1-b)/9}. 
\end{align*}
If $1\le m< 2n/3$, then Lemma \ref{mainbdlmm} gives  (using $\alpha n = m$ and $\alpha(1-\alpha b)\ge 0$) 
\begin{align*}
-m \log N + {m\choose 2}\log 2  &\le -(1-\alpha) m\log N + \log 2\\
&\le -\frac{1}{3} m\log N +\log 2. 
\end{align*}
Thus, using the trivial bound $p(j_1,\ldots,j_m)\le 1$ and the assumption \eqref{nN}, we get
\begin{align*}
&\sum_{1\le m< 2n/3} \sum_{\substack{j_1,\ldots,j_m\\ \textup{distinct}}} {n\choose m} N^{-m}2^{ {m\choose 2}}p(j_1,\ldots,j_m)\\
&\le\sum_{1\le m< 2n/3} 2N^{-m/3} n^{2m}\le 2N^{-C_5},
\end{align*}
provided that $N\ge N_0$ for some sufficiently large universal constant $N_0$. Combining all of the above, and applying Lemma \ref{plmm}, we get the required upper bound.
\end{proof}
\begin{lmm}\label{momlmm2}
There are positive universal constants $N_0$, $C_1$ and $C_2$ such that the following is true. If $N\ge N_0$ and \eqref{nN} holds, then
\[
\pp(W\ge 1) \ge 1 - C_1 N^{-C_2(1-b)}.
\]
\end{lmm}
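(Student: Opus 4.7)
The plan is to apply the second moment method (Paley--Zygmund inequality) in the form
\[
\pp(W\ge 1) = \pp(W>0) \ge \frac{(\ee W)^2}{\ee(W^2)}
\]
and combine the upper bound on $\ee(W^2)$ from Lemma \ref{w2lmm} with a matching lower bound on $\ee(W)$.

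First, I would compute $\ee(W)$ exactly. Since $|\ma| = (N)_n$ and each $A\in \ma$ satisfies $\pp(A\simeq \Gamma_1) = 2^{-{n\choose 2}}$, we have
\[
\ee(W) = (N)_n \, 2^{-{n\choose 2}} \ge N^n\Bigl(1-\frac{n}{N}\Bigr)^n 2^{-{n\choose 2}} \ge N^n 2^{-{n\choose 2}}\Bigl(1 - \frac{n^2}{N}\Bigr),
\]
using the elementary inequality $(1-x)^n \ge 1-nx$ for $x\in[0,1]$. Squaring this gives $(\ee W)^2 \ge N^{2n} 2^{-2{n\choose 2}} (1 - n^2/N)^2$.

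Next, dividing by the upper bound from Lemma \ref{w2lmm}, the $N^{2n}2^{-2\binom{n}{2}}$ factors cancel and we obtain
\[
\pp(W\ge 1) \ge \frac{(1 - n^2/N)^2}{1 + C_1 N^{-C_2(1-b)}} \ge (1 - 2n^2/N)\bigl(1 - C_1 N^{-C_2(1-b)}\bigr),
\]
using $1/(1+x)\ge 1-x$ for $x\ge 0$ and $(1-y)^2 \ge 1-2y$. Under the assumption \eqref{nN}, we have $n\le C_3 \log N$, so $n^2/N \le C_4(\log N)^2/N$, which is smaller than any negative power of $N$ (and in particular smaller than $N^{-C_2(1-b)}$ since $1-b\le 2$) once $N$ exceeds some universal threshold.

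Thus, after enlarging $C_1$ and possibly shrinking $C_2$ and increasing $N_0$ to absorb the $n^2/N$ term and the cross product, we get $\pp(W\ge 1) \ge 1 - C_1 N^{-C_2(1-b)}$ as claimed. No step here is really an obstacle since all the heavy lifting was done in Proposition \ref{phiprop} and Lemma \ref{w2lmm}; the mild care needed is in noting that the bound is vacuous when $b$ is close to $1$ (so no contradiction with $\pp\le 1$) and nontrivial when $b$ is bounded away from $1$, which is exactly the regime Theorem \ref{thm2} ultimately exploits.
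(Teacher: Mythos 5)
Your proposal is correct and follows exactly the paper's route: the paper's own proof consists of the same lower bound $\ee(W)=(N)_n2^{-{n\choose 2}}\ge N^n2^{-{n\choose 2}}(1-n^2/N)$ combined with Lemma \ref{w2lmm} and the second moment inequality. The details you supply (cancelling the $N^{2n}2^{-2{n\choose 2}}$ factors and absorbing the $n^2/N=O((\log N)^2/N)$ term into the constants, shrinking $C_2$ if needed since $1-b\le 2$) are exactly the omitted bookkeeping.
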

\begin{proof}
Note that
\begin{align*}
\ee(W) &= (N)_n 2^{-{n\choose 2}}\ge N^n2^{-{n\choose 2}}\biggl(1-\frac{n}{N}\biggr)^n\\
&\ge N^n2^{-{n\choose 2}}\biggl(1-\frac{n^2}{N}\biggr).
\end{align*}
Combining this with the upper bound on $\ee(W^2)$ from Lemma \ref{w2lmm}, and the second moment inequality, we get the desired result.
\end{proof}
\begin{proof}[Proof of Theorem \ref{thm2}]
Note that $\lfloor y_N + \ve_N\rfloor +1 \ge 2\log_2 N +1 + \ve_N$. Therefore, by Lemma~\ref{momlmm1}, $P(\lfloor y_N + \ve_N\rfloor +1, N)\to 0$ as $N\to\infty$. Similarly, note that $\lfloor y_N - \ve_N\rfloor\le 2\log_2 N + 1 - \ve_N$. By Lemma \ref{momlmm2}, this shows that  $P(\lfloor y_N - \ve_N\rfloor, N)\to 1$ as $N\to\infty$. 
\end{proof}

\section{Remarks and problems}
Theorem \ref{thm1} does not capture the way the induced isomorphic subgraph varies from $N$ to $N+1$. If $\Gamma_1$ and $\Gamma_2$ are grown by adding fresh vertices one at a time, it may well be that that largest isomorphic subgraph varies quite a bit (eventually becoming disjoint from earlier champions?). This makes the connection with the limiting $R$ more tenuous and seems worth further study. 

It seems natural to ask similar questions for other graph limit models~\cite{lovasz}. In particular, all $G(\infty,p)$ graphs are isomorphic to the Rado graph, for any $p\in (0,1)$. This makes understanding the largest isomorphic induced subgraph of two independent $G(N, p)$ graphs more interesting.

There are a variety of notions of $\Gamma_1$ being contained in $\Gamma_2$. Just isomorphic as a subgraph (without the `induced' constraint)? As labeled graphs? The classic paper~\cite{barrow} relates such problems to the problem of finding maximal cliques.

We have focused on the yes/no question. There are further counting questions --- how many copies of a pick from $G(n,p)$ appear in $G(N,q)$ and how is this number distributed? (See forthcoming work of Surya, Warnke and Zhu for a solution of this problem.) 

Finally, Knuth's treatment~\cite{knuth} treats subgraph isomorphism as a special case of constraint satisfaction problems, and similar questions can be asked.

\end{document}